\documentclass[oneside, 10pt]{amsart}

\usepackage{amsmath, amsfonts, amssymb, amsthm, graphics}

\newtheorem{theorem}{Theorem}[section]
\newtheorem{lemma}[theorem]{Lemma}
\newtheorem{corollary}[theorem]{Corollary}
\newtheorem{proposition}[theorem]{Proposition}

\theoremstyle{definition}

\numberwithin{equation}{section}

\begin{document}

\title[Representations for the Bloch type semi-norm]{Representations for the Bloch type semi-norm of Fr\'{e}chet differentiable mappings}

\author{Marijan Markovi\'{c}}

\begin{abstract}
In this paper we give some results  concerning Fr\'{e}chet differentiable  mappings between domains  in normed spaces with controlled growth. The results
are  mainly  motivated by Pavlovi\'{c}'s equality for the Bloch semi-norm of continuously differentiable mappings in the Bloch class  on the unit ball of
the Euclidean space as well as the very recent  Joci\'{c}'s   generalization of this result.
\end{abstract}

\subjclass[2010]{Primary 32A18, 30D45; Secondary 32A37}

\keywords{Bloch functions, normal functions, operator monotone functions, normed spaces, Banach spaces, Fr\'{e}chet differentiable  mappings}

\address{
Faculty of  Sciences and Mathematics\endgraf
University of Montenegro\endgraf
D\v{z}ord\v{z}a Va\v{s}ingtona bb\endgraf
81000 Podgorica\endgraf
Montenegro\endgraf}
\email{marijanmmarkovic@gmail.com}

\maketitle

\section{Introduction}
The starting  point of this paper is  the Pavlovi\'{c} expression for the Bloch     semi-norm  of a continuously differentiable function on the unit ball
${B}^m\subseteq \mathbf{R}^m$, obtained in 2008, and the  Joci\'{c}'s result concerning   Fr\'{e}chet differentiable mappings that satisfy the Bloch type
growth  condition obtained in 2019.

The following is a content of \cite[Theorem 2]{PAVLOVIC.PEMS}.

\begin{proposition}[see \cite{PAVLOVIC.PEMS}]\label{PR.PAVLOVIC}
For a complex-valued continuously differentiable function $f$ on the unit  ball $B^m$,  $m\ge 2$,  we have
\begin{equation*}
\|f\|_{\mathfrak {B}}  =  \sup_{(x,y)\in \Delta^c _ {B^m }}  (1-|x|^2)^{\frac12}(1-|y|^2)^{\frac12} \frac {|f(x) - f(y)|}{|x-y|},
\end{equation*}
where
\begin{equation*}
\|f\|_{\mathfrak {B} } = \sup_{x\in B^m } (1-|x|^2)  {\|d_f(x)\|}
\end{equation*}
is the  Bloch semi-norm of   the function   $f$.
\end{proposition}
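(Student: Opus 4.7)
The strategy is to prove two matching inequalities between $\|f\|_{\mathfrak{B}}$ and the supremum $R$ on the right-hand side. The direction $\|f\|_{\mathfrak{B}}\le R$ is routine: fix $x\in B^m$, choose a unit vector $u\in\mathbf{R}^m$ nearly realizing $\|d_f(x)\|$, and let $y=x+hu$ with $h\to 0$. Then $(1-|x|^2)^{1/2}(1-|y|^2)^{1/2}\to 1-|x|^2$ while $|f(x)-f(y)|/|x-y|\to |d_f(x)(u)|$, so the supremum over $(x,y)\in\Delta^c_{B^m}$ is at least $(1-|x|^2)\|d_f(x)\|$; taking the supremum in $x$ completes this half.

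For the harder inequality $R\le \|f\|_{\mathfrak{B}}$, I would integrate along the straight segment $\gamma(t)=(1-t)x+ty$, $t\in[0,1]$. By the fundamental theorem of calculus and the defining Bloch bound,
\[ |f(y)-f(x)|\le |x-y|\int_0^1\|d_f(\gamma(t))\|\,dt\le \|f\|_{\mathfrak{B}}\,|x-y|\int_0^1\frac{dt}{1-|\gamma(t)|^2}. \]
A direct expansion of $|(1-t)x+ty|^2$ (which is essentially the concavity of $v\mapsto 1-|v|^2$ along a chord) gives the identity
\[ 1-|\gamma(t)|^2=(1-t)(1-|x|^2)+t(1-|y|^2)+t(1-t)|x-y|^2, \]
so, discarding the nonnegative quadratic remainder and writing $a=1-|x|^2$, $b=1-|y|^2$,
\[ \int_0^1\frac{dt}{1-|\gamma(t)|^2}\le \int_0^1\frac{dt}{(1-t)a+tb}=\frac{\log(b/a)}{b-a}. \]

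The argument now reduces to the one-variable inequality $\sqrt{ab}\,\log(b/a)\le b-a$ for $a,b>0$. The substitution $u=\tfrac12\log(b/a)$ transforms it into $2u\le e^u-e^{-u}=2\sinh u$, which is immediate. Assembling the estimates gives
\[ |f(x)-f(y)|\le \|f\|_{\mathfrak{B}}\cdot\frac{|x-y|}{\sqrt{(1-|x|^2)(1-|y|^2)}}, \]
and taking the supremum over $(x,y)\in\Delta^c_{B^m}$ yields $R\le \|f\|_{\mathfrak{B}}$.

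The only delicate step is preserving the sharp constant in this last integral estimate: replacing the linear lower bound $(1-t)a+tb$ on $1-|\gamma(t)|^2$ by the weaker AM-GM estimate $2\sqrt{t(1-t)ab}$ would introduce a spurious factor of $\pi/2$ and destroy the equality, so one must work with the linear interpolation rather than with its geometric-mean lower bound. The dimension restriction $m\ge 2$ plays no visible role in the plan and appears only to match the setting of the cited paper.
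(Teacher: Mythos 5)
Your proof is correct, but it follows a genuinely different and more elementary route than the paper's. The paper derives Proposition \ref{PR.PAVLOVIC} from the general metric-space machinery of Theorem \ref{TH.PAVLOVIC.MS}: it takes $\omega(x)=(1-|x|^2)^{-1}$, identifies $d_\omega$ with the hyperbolic distance $\rho(x,y)=\mathrm{asinh}\,\frac{|x-y|}{\sqrt{1-|x|^2}\sqrt{1-|y|^2}}$ on $B^m$, verifies that $\Psi(x,y)=\sqrt{1-|x|^2}\sqrt{1-|y|^2}$ is an admissible function via the inequality $\mathrm{asinh}\,t\le t$, and invokes the abstract equality $\mathfrak{B}_f=\mathfrak{L}_f$ (the Banach-space version, Corollary \ref{COR.PAVLOVIC}, instead runs through operator monotone functions and Lemma \ref{LE.JOCIC} with $\varphi=\tanh^{-1}$). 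You replace all of this by a direct computation: the fundamental theorem of calculus along the chord, the exact identity $1-|\gamma(t)|^2=(1-t)a+tb+t(1-t)|x-y|^2$ (which I checked and which exploits the Euclidean inner product, so it does not transfer to general normed spaces), the explicit integral $\int_0^1\frac{dt}{(1-t)a+tb}=\frac{\log(b/a)}{b-a}$, and the logarithmic-mean/geometric-mean inequality. What the paper's approach buys is generality (arbitrary normed or even metric targets, weaker differentiability, other weights); what yours buys is a short self-contained proof of the stated proposition that needs no theory of rectifiable paths, $\omega$-distances, or admissible functions, and is essentially Pavlovi\'{c}'s original argument. Your remark about the spurious $\pi/2$ from the AM--GM lower bound is accurate and worth keeping. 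One small repair: the inequality $\sqrt{ab}\log(b/a)\le b-a$ as literally stated fails for $a>b$ (both sides are negative and the sense reverses); since $\frac{\log(b/a)}{b-a}$ and $\frac{1}{\sqrt{ab}}$ are symmetric in $a,b$, you should say ``without loss of generality $b\ge a$'' before substituting $u=\frac12\log(b/a)\ge 0$, and treat $a=b$ by continuity. With that adjustment the argument is complete; the hypothesis $m\ge2$ is indeed not used.
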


In the above proposition  $d_f(x)$ is the differential of $f$      at $x\in  B^m$, and  $\|d_f(x)\|$  is the operator norm of  the linear mapping $d_f(x):
\mathbf{R}^m\rightarrow\mathbf{R}^2$.  For  a set $S$ we have denoted
\begin{equation*}
\Delta_S =\{(x,x):x\in S\}\subseteq   S\times S\quad\text{and}\quad  \Delta_S^c=S\times S\backslash  \Delta_ S.
\end{equation*}

In \cite[Remark 4]{PAVLOVIC.PEMS} the author conjectures  that his theorem remains valid  if one assumes  that $f$ is a continuous differentiable mapping
on the unit  ball of  a  Hilbert  space   with values in a Banach space. This is  confirmed by Joci\'{c} \cite[Corollary 3.4]{JOCIC.JFA}
even for Fr\'{e}chet differentiable mappings  on the unit ball of a Banach space that have  image  in   an  another   Banach space.

If $X$ is a normed space with a norm $\|\cdot\|_X$, we  denote  by ${B}_X = \{x\in X: \|x\|_X<1\}$  the unit ball in $X$, and $B_X(x,r) = x+r B_X$ is the
open ball with centre  $x$ and   radius $r$. The corresponding  metric on $X$  is  $d_X(x,y) = \|x-y\|_X$. For the sake of  simplicity we  will sometimes
drop  the indexes.

Assume that   $X$ and $Y$ are  normed spaces. Recall  that a mapping $f:\Omega\rightarrow Y$,    where  $\Omega\subseteq X$  is  a domain, is Fr\'{e}chet
differentiable at $x\in\Omega$  if there exists  a  continuous linear mapping $d_f(x):X\rightarrow Y$   (called  Fr\'{e}chet  differential), such that in
an  open ball  $B_X(x,r)$ we have
\begin{equation*}
f(y) - f(x) = d _ f (x)  (y-x) +  \alpha (y-x),\quad y\in B_X(x,r),
\end{equation*}
where  $\alpha:B_X (0,r)\rightarrow Y$ satisfies $\lim _{h\rightarrow 0}{\alpha (h)}/{\|h\|}=0$. We denote  by
\begin{equation*}
\|d_f (x)\| = \sup _{\zeta\in\partial B_X} \|d_f(x)\zeta\|
\end{equation*}
the  operator  norm of  $d_f(x): X\rightarrow Y$.

For differential   calculus of  mappings on domains in a  normed space  we refer to  Cartan's book \cite{CARTAN.BOOK}.

Joci\'{c}  \cite{JOCIC.JFA} proved a result which is given  in the next  proposition.

\begin{proposition}[see \cite{JOCIC.JFA}]\label{PR.JOCIC}
Let $X$ and $Y$ be Banach spaces, and let $f:{B}_X\rightarrow Y$  be  a Fr\'{e}chet differentiable  mapping. For  a nonconstant operator monotone function
$\varphi$   on  $(-1,1)$, such that $\varphi'$ is  increasing on  $[0,1)$,   the Bloch  type semi-norm
\begin{equation*}
\|f\|_{\mathfrak {B}, \varphi'}  = \sup_{x\in  {B}_X} \frac {\|d_f(x)\|}{\varphi'(\|x\|)}
\end{equation*}
may be expressed as
\begin{equation*}
\|f\|_{\mathfrak {B},\varphi'}=\sup_{(x,y)\in\Delta^c_{B_X}}\frac {\|f(x) - f(y )\|}{\sqrt{ \varphi'(\|x\|)} \|x-y\| \sqrt{\varphi'(\|y\|) }}.
\end{equation*}
\end{proposition}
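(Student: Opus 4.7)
The plan is to prove the two inequalities $A\le B$ and $B\le A$, where $A:=\|f\|_{\mathfrak{B},\varphi'}$ and $B$ denotes the right-hand side of the claimed identity. For $A\le B$ I would fix $x_0\in B_X$ and a unit vector $\zeta\in\partial B_X$, and for small $t>0$ set $y_t=x_0+t\zeta$; then $y_t\in B_X$, and Fr\'{e}chet differentiability gives $\|f(y_t)-f(x_0)\|/t\to\|d_f(x_0)\zeta\|$, while continuity of $\varphi'$ (operator monotone functions on $(-1,1)$ are analytic) gives $\varphi'(\|y_t\|)\to\varphi'(\|x_0\|)$. Passing to the limit in the defining quotient of $B$ produces
\[
\frac{\|d_f(x_0)\zeta\|}{\varphi'(\|x_0\|)}=\lim_{t\to 0^+}\frac{\|f(y_t)-f(x_0)\|}{\sqrt{\varphi'(\|x_0\|)}\,\|y_t-x_0\|\,\sqrt{\varphi'(\|y_t\|)}}\le B,
\]
and taking the supremum over $\zeta$ and then over $x_0$ yields $A\le B$.

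For the reverse inequality I would fix $(x,y)\in\Delta^c_{B_X}$ and parametrize the segment by $\gamma(t)=x+t(y-x)$. The Banach-space fundamental theorem of calculus gives $f(y)-f(x)=\int_0^1 d_f(\gamma(t))(y-x)\,dt$, so
\[
\|f(y)-f(x)\|\le A\,\|y-x\|\int_0^1\varphi'(\|\gamma(t)\|)\,dt.
\]
Since $\|\gamma(t)\|\le(1-t)\|x\|+t\|y\|=:\lambda(t)$ and $\varphi'$ is increasing on $[0,1)$, the integral is bounded by $\int_0^1\varphi'(\lambda(t))\,dt$, which via the substitution $s=\lambda(t)$ equals the divided difference $\varphi[\|x\|,\|y\|]:=(\varphi(\|y\|)-\varphi(\|x\|))/(\|y\|-\|x\|)$ (read as $\varphi'(\|x\|)$ when $\|x\|=\|y\|$). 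The argument therefore reduces to the divided-difference inequality
\[
\varphi[\|x\|,\|y\|]\le\sqrt{\varphi'(\|x\|)\,\varphi'(\|y\|)}.
\]

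This last inequality is the crux, and I would derive it from L\"owner's characterisation of operator monotonicity: $\varphi$ is operator monotone on $(-1,1)$ iff every L\"owner matrix $(\varphi[a_i,a_j])$ at points $a_i\in(-1,1)$ is positive semidefinite, with the diagonal convention $\varphi[a,a]=\varphi'(a)$. Applied with $n=2$ at the points $\|x\|,\|y\|\in[0,1)$, nonnegativity of the $2\times 2$ determinant is precisely the required inequality. The main obstacle is exactly this step: recognising that operator monotonicity, not merely ordinary monotonicity, is the right hypothesis, and extracting the bound via the $n=2$ L\"owner matrix. Everything else reduces to standard Fr\'{e}chet calculus together with a triangle-inequality estimate inside the increasing function $\varphi'$.
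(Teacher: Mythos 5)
Your argument is correct in substance, and its crux coincides with the paper's: the divided-difference bound $\varphi[\|x\|,\|y\|]\le\sqrt{\varphi'(\|x\|)\varphi'(\|y\|)}$ obtained from positive semidefiniteness of the $2\times2$ L\"owner matrix is exactly Lemma \ref{LE.JOCIC} (which the paper imports from Joci\'{c}, who derives it from the Nevanlinna representation of Pick functions), and the preceding reduction --- integrate along the segment, replace $\|\gamma(t)\|$ by $(1-t)\|x\|+t\|y\|$ using monotonicity of $\varphi'$, and recognise the divided difference --- is literally the computation in the proof of Theorem \ref{TH.JOCIC.NORMED}. Where you diverge is the wrapper around that computation. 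You run the hard direction through the identity $f(y)-f(x)=\int_0^1 d_f(\gamma(t))(y-x)\,dt$; the paper instead sets up weighted lengths $d_\omega$, $d_{\tilde\omega}$ of rectifiable paths and an ``admissible function'' $\Psi(x,y)=(\varphi'(\|x\|)\varphi'(\|y\|))^{-1/2}$, so that only scalar Riemann integrals of $\varphi'$ along paths ever appear (Theorem \ref{TH.PAVLOVIC.MS} together with Lemma \ref{LE.IMAGE}). The one step of yours that needs justification is precisely that vector-valued fundamental theorem of calculus: for a map that is Fr\'{e}chet differentiable but not $C^1$, the integrand $t\mapsto d_f(\gamma(t))(y-x)$ is not obviously integrable, and you genuinely need the integral form rather than the crude mean value inequality, since $\sup_t\varphi'(\|\gamma(t)\|)=\max\{\varphi'(\|x\|),\varphi'(\|y\|)\}$ overshoots the geometric mean. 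In the Banach setting of the proposition this can be repaired (e.g.\ by the mean value inequality with the continuous scalar majorant $A\,\varphi'((1-t)\|x\|+t\|y\|)\,\|y-x\|$, or by checking Bochner integrability of the everywhere-derivative), but avoiding this step is exactly what the paper's path-length machinery buys: it lets the author drop both completeness of $X$ and $Y$ and any continuity assumption on $d_f$, and extend the statement to arbitrary convex domains. Your easy direction ($A\le B$ via difference quotients along $x_0+t\zeta$) is the content of Lemma \ref{LE.NORMED.DIFF} combined with the $\limsup$ argument opening the proof of Theorem \ref{TH.PAVLOVIC.MS}, and is fine.
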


As in \cite{JOCIC.JFA}  take
\begin{equation*}
\varphi (t)= \tanh^{-1}  t =  \frac12 \log \frac {1+t}{1-t},\quad  t\in (-1,1),
\end{equation*}
in Proposition \ref{PR.JOCIC} in order to recover  Proposition \ref{PR.PAVLOVIC} in  this new and more  general setting. Indeed, the function $\varphi (t)$
is operator  monotone on  $(-1,1)$  (for this fact see  \cite{JOCIC.JFA}) and we  have
\begin{equation*}
\varphi'(t)=(1-t^2)^{-1},\quad   t\in [0,1).
\end{equation*}
It follows that the statement of   Proposition \ref{PR.PAVLOVIC}   is  valid for  Fr\'{e}chet differentiable  mappings on the unit ball  of  a Banach space
with values in an  another Banach space. Note that Joci\'{c} has   removed the  condition concerning continuity of a  differential. In this paper we  prove
a generalization of  Proposition  \ref{PR.JOCIC}  for Fr\'{e}chet    differentiable  mappings on a convex  domain in a  normed space that have  image in an
another normed  space. The proof is based  on our approach. Thus, we are able to  remove  the  completeness    condition  which  figures in the proposition
above, as well as   the condition concerning  continuity  of a differential  on  a domain.

Among     other results in this paper  we obtain the following one. Let $\omega(x)$ be a continuous and positive function on a domain $\Omega$  of a normed
space $X$,    and  let it be  a monotone function of $\|x\|$. If  $f:\Omega \rightarrow Y$   is  a Fr\'{e}chet differentiable mapping which has an image in
a   normed space $Y$, then the   Bloch type   semi-norm
\begin{equation*}
\|f\|_{\mathfrak{B},\omega} = \sup_{x \in \Omega} \frac { \|d_f(x)\| } {\omega(x)}
\end{equation*}
is equal to  the  differential-free semi-norm
\begin{equation*}
\|f\|_{\mathfrak{B},\omega} =   \sup_{(x,y)\in \Delta^c_\Omega}     \min \left\{ \frac{1}{\omega (x)}, \frac {1}{\omega(y)}\right\}
\frac {\|f(x) - f(y)\|}{\|x - y\|}.
\end{equation*}

It seem that we have a better equivalent for the quantity   $\|f\|_{\mathfrak {B},\varphi'}$ then the one given  in  Proposition \ref{PR.JOCIC}, since
\begin{equation*}
\min\left\{\frac{1}{\varphi'(\|x\|)},\frac {1}{\varphi'(\|y\|)}\right\}
\le \frac {1}{{ \varphi'(\|x\|)^{\frac 12} } {\varphi'(\|y\|)^{\frac 12} }}, \quad  x,\,  y\in \Omega.
\end{equation*}
The above inequality is a consequence of the obvious one $\min\{a,b\}\le \sqrt{ab}$ for  nonnegative numbers $a$    and $b$. Note that  we need not the
assumption on  operator  monotonicity of  the  primitive   function for  $\omega$.

The result we have just described  is a  consequence of a more general one given in Theorem \ref{TH.MIN.MAX}. This min-max theorem  also generalize the
author recent result \cite[Corollary 3.1]{MARKOVIC.CMB} for continuously  differentiable mappings  on a  convex domain  in  $\mathbf{R}^m$.

Proposition \ref{PR.PAVLOVIC} is motivated by the Holland-Walsh characterisation of analytic  Bloch functions on the  unit disk obtained at the end of
eighties \cite[Theorem 3]{HOLLAND.WALSH.MATH.ANN}. This characterization says that  an analytic function $f$ on the unit disk $\mathbf{U}$  is a Bloch
function, i.e.,  $(1-|z|^2) |f'(z)|$ is bounded  on  $\mathbf{U}$, if and only if
\begin{equation*}
{\sqrt{1-|z|^2}\sqrt{1-|w|^2}} \frac {|f(z) -  f(w)|}{|z - w|}
\end{equation*}
is bounded  as a  function of $z\in\mathbf{U}$ and $w\in \mathbf{U}$ for $z\ne w$. This clearly follows from Proposition  \ref{PR.PAVLOVIC} having in
mind that  $\|d_f(z)\| =  |f'(z)|$,  $z\in\mathbf{U}$.

We  provide  a proof of  Proposition \ref{PR.PAVLOVIC}  in the  last section which is based   on our approach.  In that section we also find a new
characterization of normal mappings on the unit disk which states:  An analytic function $f$ is normal on $\mathbf{U}$ if  and only if
\begin{equation*}
\frac{\sqrt{1-|z|^2}\sqrt{1-|w|^2}}{\sqrt{1+|f(z)|^2}\sqrt{1+|f(w)|^2}}  \frac {|f(z)  -f(w)|}{|z - w|}
\end{equation*}
is bounded as a function of  $z\in\mathbf{U}$ and $w\in \mathbf{U}$ for $z\ne w$.

\textbf{Acknowledgment.} I would like to thank the referee of this paper for carefully reading the paper and pointing out several errors  in the first
version of the manuscript.

\section{Preliminaries}
Among other things in this section we will briefly describe  the concept of the  Riemann integral  of a  function over a  rectifiable path in a metric
space.  It seems that one cannot find  any reference for integration in a such general context.  Of course,  if one restricts attention to the domains
in  $\mathbf{R}^n$,  this becomes  an ordinary  material.

A path $\gamma$ in a metric space $X$ is a continuous mapping $\gamma:[a,b]  \rightarrow X$, where $[a,b]\subseteq \mathbf{R}$. It is convenient to
identify the path $\gamma$ with  its image $\gamma([a,b])\subseteq X$.

Let $\mathcal{P}[a,b]$ be the family of  all  partitions of the  segment $[a,b]$, i.e., the family of all finite sequences $T: t_0,t_1,\dots,t_n$, such
that $n\in\mathbf{N}$, and  $a=t_0<t_1<\dots<  t_n=b$.

For a path  $\gamma:[a,b]\rightarrow X$ and  a partition  $T\in \mathcal{P}[a,b]$,   $T: t_0,t_1,\dots,t_n$, denote
\begin{equation*}
\sigma _\gamma  (T) =  \sum_{i=1}^n d  (\gamma(t_{i-1}),\gamma (t_i)).
\end{equation*}
The  path $\gamma$ is  rectifiable if the set $\{\sigma _\gamma (T):  T \in\mathcal{P}[a,b] \}$  is bounded.  The length  of the path  $\gamma$ is
\begin{equation*}
\ell  (\gamma)   = \sup_{  T\in\mathcal {P}[a,b] } \sigma _\gamma  (T).
\end{equation*}
Note that  we  obviously have $\ell (\gamma) \ge  d (\gamma(a),\gamma(b))$.

It is not hard to prove  that if  $\gamma:[a,b]\rightarrow X$ is a rectifiable path in a metric space $X$,  and if $[c,d] \subseteq [a,b]$, then the  restricted path $\gamma:[c,d]\rightarrow X$ is also  rectifiable. This    new  path is denoted  by $\gamma_{[c,d]}$. It is clear that  $\ell (\gamma)
 = \ell  (\gamma_{[a,c]}) + \ell(\gamma_{[c,b]})$.

The segment $[x,y]$ in a normed space $X$ with the  endpoints  $x\in X$ and $y\in X$  is the path  $\gamma(t) = (1-t) x +  ty$, $t\in [0,1]$.  It is
easy to show that $[x,y]$  is a  rectifiable path, and  $\ell ([x,y]) = \|x-y\|$.

Let  $X$ and $Y$ be metric spaces.  For a  mapping $f:X\rightarrow  Y$ introduce
\begin{equation*}
d^\ast_f (x)=\limsup _{y\rightarrow x} \frac {d   ( f(x), f(y))}{d(x,y)},   \quad x\in X.
\end{equation*}
We set $d^\ast _f (x) = 0$,  if $x\in X$ is an isolated point.

The  content of the following lemma is maybe well known, however  we will  provide a proof  since  we are   not able to   find  a  reference.

\begin{lemma}\label{LE.NORMED.DIFF}
Let $\Omega\subseteq X$ be a domain in  a normed space $X$, and  $f:\Omega \rightarrow Y$ be a Fr\'{e}chet differentiable mapping  at  $x\in\Omega$, where
$Y$ is an another normed space. For the operator  norm of the Frech\'{e}t differential   $d_f(x):X\rightarrow Y$, $\|d_ f (x) \|  = \sup_{\zeta\in\partial
 B_X} \| d_f (x) \zeta\|$   we have $\| d_ f (x)\|=  d_f^\ast (x)$,  i.e.,
\begin{equation*}
\| d _ f (x)\|    =  \limsup_{y\rightarrow x} \frac {\|f(x) - f(y)\|}{\|x-y\|}.
\end{equation*}
\end{lemma}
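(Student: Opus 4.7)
The plan is to prove the two inequalities $\|d_f(x)\|\ge d_f^\ast(x)$ and $\|d_f(x)\|\le d_f^\ast(x)$ separately, both directly from the defining equality
\begin{equation*}
f(y)-f(x)=d_f(x)(y-x)+\alpha(y-x),\qquad y\in B_X(x,r),
\end{equation*}
with $\|\alpha(h)\|/\|h\|\to 0$ as $h\to 0$.

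For the upper estimate $d_f^\ast(x)\le\|d_f(x)\|$, I would take the norm of both sides, apply the triangle inequality, and use $\|d_f(x)(y-x)\|\le\|d_f(x)\|\,\|y-x\|$ to obtain
\begin{equation*}
\frac{\|f(y)-f(x)\|}{\|y-x\|}\le \|d_f(x)\|+\frac{\|\alpha(y-x)\|}{\|y-x\|}
\end{equation*}
for $y\in B_X(x,r)\setminus\{x\}$. Passing to $\limsup$ as $y\to x$ kills the second summand by the remainder property, giving the inequality immediately.

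For the lower estimate $d_f^\ast(x)\ge\|d_f(x)\|$, I would approach the supremum in $\|d_f(x)\|=\sup_{\zeta\in\partial B_X}\|d_f(x)\zeta\|$ by approximation: fix $\varepsilon>0$ and pick $\zeta_0\in\partial B_X$ with $\|d_f(x)\zeta_0\|>\|d_f(x)\|-\varepsilon$. Then probe $d_f^\ast(x)$ along the specific ray $y_t=x+t\zeta_0$ for $t\in(0,r)$. Since $\|y_t-x\|=t$, the reverse triangle inequality gives
\begin{equation*}
\frac{\|f(y_t)-f(x)\|}{\|y_t-x\|}\ge \|d_f(x)\zeta_0\|-\frac{\|\alpha(t\zeta_0)\|}{t}.
\end{equation*}
Letting $t\to 0^+$, the subtracted term vanishes, so $d_f^\ast(x)\ge\|d_f(x)\zeta_0\|>\|d_f(x)\|-\varepsilon$, and $\varepsilon$ is arbitrary.

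There is no serious obstacle here; the only point that requires a small amount of care is that in an infinite-dimensional normed space the supremum defining $\|d_f(x)\|$ need not be attained, which is precisely why I phrase the lower bound via an $\varepsilon$-approximating direction $\zeta_0$ rather than by plugging in the maximizing unit vector. Once this is noticed, both inequalities reduce to one application of the remainder estimate each.
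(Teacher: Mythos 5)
Your proof is correct and follows essentially the same route as the paper's: the upper bound is the identical triangle-inequality computation, and the lower bound in both cases comes from evaluating the difference quotient along rays $y = x + t\zeta$ and letting $t \to 0^{+}$, using the remainder estimate. The only difference is presentational --- the paper packages the lower bound as a proof by contradiction applied to every direction $\zeta \in \partial B_X$, whereas you argue directly with an $\varepsilon$-almost-maximizing direction $\zeta_0$, which is a slightly cleaner arrangement and correctly avoids assuming the supremum over $\partial B_X$ is attained.
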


\begin{proof}
For the sake of  simplicity let us denote $d_f(x) = A$. In a sufficiently small open ball $B_X(x,r)$   there exists  a mapping
$\alpha : B_X(x,r)\rightarrow Y$ such that
\begin{equation*}
f(y)-f(x) = A  (y-x) + \alpha (y-x),\quad   \lim_{y\rightarrow x}  \frac{\alpha (y-x)}{\|y-x\|}  = 0.
\end{equation*}
Therefore,
\begin{equation*}\begin{split}
\|f(y)  - f(x)\| & =  \|A (y-x) + \alpha (y-x)  \|    \le \|A (y-x)\| + \|\alpha (y-x)  \| \\& \le \|A\|\|y-x\| +\|\alpha (y-x)\|.
\end{split} \end{equation*}
It follows
\begin{equation*}
\frac {\|f(y)  - f(x)\|} {\|y-x\|} \le      \|A\| + \frac { \|\alpha (y-x)  \|}{\|y-x\|},
\end{equation*}
and
\begin{equation*}
\limsup_{y\rightarrow  x}\frac {\|f(y)  - f(x)\|} {\|y-x\|} \le  \|A\|.
 \end{equation*}

Assume now   contrary,  that  we have   the strict inequality above.  Then there exists a real number   $t$ that satisfies
\begin{equation*}
\limsup_{y\rightarrow  x}  \frac {\|f(y)  - f(x)\|} {\|y-x\|}  < t< \| A\|.
\end{equation*}
This means  that there exists  an  open ball  $B_X(x,\rho )$  such that
\begin{equation*}
\|f(y ) - f(x)\|  \le  t \|y-x\|,\quad   y\in B_X(x,\rho ).
\end{equation*}
For $s\in(0,\rho)$ and $\zeta\in \partial B_X$, we have $y =x +s \zeta \in B_X (x,\rho)$. Since $f(y) - f(x) = A(y-x)+ \alpha (y-x) $, the  preceding
inequality   becomes  $\|A(s\zeta  ) + \alpha (s\zeta) \| \le t\|s\zeta\|$, i.e.,  $\|A\zeta  +{\alpha (s\zeta)}/{s}\| \le t$. If  we  let
$s\rightarrow 0$ we obtain $\| A\zeta  \|\le t$. Since this holds for every $\zeta\in \partial  B_X$, we reach a  contradiction  $\|A\|\le  t$.
\end{proof}

The   following   lema may be found in \cite{C.J.CJM},                 but for the sake of  completeness we will adapt  a proof  here.

\begin{lemma}[see \cite{C.J.CJM}]\label{LE.IMAGE}
Let $X$ and $Y$ be metric spaces,     let $\gamma$ be a rectifiable path, and let a mapping $f: X \rightarrow Y$ be a such one   that there exists
$d^\ast_f (x)$ for every   $x\in X$.  Assume,   moreover,  that there exists a  constant $m$ such that  $d^\ast_f(x) \le m$,  $x\in X$. Then  the
path $\delta = f \circ \gamma $ is also  rectifiable, and  $\ell (\delta ) \le m  \ell(\gamma)$.
\end{lemma}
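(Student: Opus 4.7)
The plan is to establish the pointwise estimate $d(f(\gamma(s)),f(\gamma(t))) \le m\,\ell(\gamma_{[s,t]})$ for every $a \le s \le t \le b$; summing this over the subintervals of an arbitrary partition $T$ of $[a,b]$ then yields $\sigma_{f\circ\gamma}(T) \le m\,\ell(\gamma)$, whence $\delta := f \circ \gamma$ is rectifiable with $\ell(\delta) \le m\,\ell(\gamma)$.

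To prove the pointwise estimate, fix $\epsilon > 0$ and $s \in [a,b]$, and on $[s,b]$ introduce the real-valued functions $\psi(t) = d(f(\gamma(s)),f(\gamma(t)))$ and $\phi(t) = \ell(\gamma_{[s,t]})$. Both are continuous: $\psi$ because finiteness of $d^\ast_f$ forces $f$ (hence $f\circ\gamma$) to be continuous, and $\phi$ by the standard continuity of the arc-length function of a rectifiable path. Moreover $\phi$ is nondecreasing with $\phi(s)=0$. At each $t_0 \in [s,b)$, the hypothesis $d^\ast_f(\gamma(t_0)) \le m$ combined with continuity of $\gamma$ at $t_0$ supplies a $\lambda > 0$ such that for every $h \in (0,\lambda)$,
\begin{equation*}
\psi(t_0+h) - \psi(t_0) \le d(f(\gamma(t_0+h)),f(\gamma(t_0))) \le (m+\epsilon)\,d(\gamma(t_0+h),\gamma(t_0)) \le (m+\epsilon)(\phi(t_0+h)-\phi(t_0)),
\end{equation*}
the successive steps being the reverse triangle inequality, the defining property of $d^\ast_f$, and the elementary bound $d(\gamma(u),\gamma(v)) \le \ell(\gamma_{[u,v]})$. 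Consequently $g := \psi - (m+\epsilon)\phi$ is continuous on $[s,b]$, satisfies $g(s) = 0$, and obeys $g(t_0+h) \le g(t_0)$ for all sufficiently small $h > 0$ at every $t_0 \in [s,b)$.

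I would then invoke the classical one-dimensional lemma---provable by a short open/closed argument on the sets $E_\eta = \{t \in [s,b] : g(t) \le \eta(t-s)\}$ for $\eta > 0$ and letting $\eta \downarrow 0$---that such a function is nonincreasing, obtaining $g(b) \le 0$, i.e., $\psi(b) \le (m+\epsilon)\phi(b)$. Letting $\epsilon \downarrow 0$ gives $d(f(\gamma(s)),f(\gamma(b))) \le m\,\ell(\gamma_{[s,b]})$, and repeating the argument on each subpath $\gamma_{[s,t]}$ yields the full pointwise bound.

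The main obstacle I anticipate is the continuity of the arc-length function $\phi$ in the paper's abstract metric-space setting: although a classical fact, it has not been established in the preliminaries and will need a short in-line verification, typically carried out by showing that for any $\epsilon > 0$ there is a partition of $[s,b]$ whose partition sum is within $\epsilon$ of $\ell(\gamma_{[s,b]})$ and then exploiting continuity of $\gamma$ to control the length contributed near $t$. A secondary subtlety is passing from the one-sided local inequality $g(t_0+h)\le g(t_0)$ to global monotonicity, but this is standard.
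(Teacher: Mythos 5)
Your proof is correct, and it reaches the key pointwise estimate $d(f(\gamma(s)),f(\gamma(t)))\le (m+\epsilon)\,\ell(\gamma|_{[s,t]})$ by a genuinely different route than the paper. The paper proves this estimate by a covering/chaining argument: it covers $[s,t]$ by intervals $I_x$ on which the local bound $d(f(\gamma(x)),f(\gamma(x')))\le(m+\epsilon)\,d(\gamma(x),\gamma(x'))$ holds, extracts a minimal finite subcover, interleaves auxiliary points in the pairwise intersections, and chains the triangle inequality; the resulting sum $\sum_k d(\gamma(x_k),\gamma(x_{k+1}))$ is then dominated by $\ell(\gamma|_{[s,t]})$ simply because it is a partition sum. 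You instead run a differential-inequality-style argument on $g=\psi-(m+\epsilon)\phi$ with $\phi(t)=\ell(\gamma|_{[s,t]})$: local right-decrease plus continuity forces $g\le 0$. Both arguments are sound, and the final summation over a partition of $[a,b]$ is the same in both (the paper's version of that step carries a harmless extra $\epsilon$ because it only asserts the reverse inequality $\ell(\delta)\le\sigma_\delta(T)+\epsilon$ for a well-chosen $T$, whereas your pointwise bound holds with $m$ itself after letting $\epsilon\downarrow 0$, so you can bound every partition sum directly). The trade-off is the one you identify yourself: your route needs continuity of the arc-length function $t\mapsto\ell(\gamma|_{[s,t]})$ in a general metric space, a classical fact but one the paper's preliminaries do not record (they only give additivity and $\ell(\gamma|_{[u,v]})\ge d(\gamma(u),\gamma(v))$, which is all the paper's chaining argument consumes); so your proof is not self-contained without the in-line verification you sketch. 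In exchange you avoid the minimal-subcover bookkeeping, which in the paper's write-up is the most delicate (and typo-prone) part. Your remark that finiteness of $d^\ast_f$ already forces continuity of $f$, hence of $\psi$, is correct and fills a point the paper leaves implicit.
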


\begin{proof}
Let  $\varepsilon> 0$ be any number, and let $\gamma:[a,b]\rightarrow X$.  We will firstly prove  that
\begin{equation*}
d( f (\gamma (t)), f (\gamma (s))) \le   (m+\varepsilon) \ell (\gamma|_ {[s, t]}), \quad  a\le  s <  t \le  b
\end{equation*}
Indeed, for every $x \in  [s, t]$  we have $d^\ast_f (x) \le  m$, so there exists  an interval $I_ x = (x - \delta_x , x + \delta_x)$, $\delta_x>0$,
such that
\begin{equation*}
d( f (\gamma(x)), f (\gamma(x'))) \le  (m  + \varepsilon) d(\gamma(x), \gamma(x')),\quad  x' \in  I _x\cap  [s, t].
\end{equation*}
Let $J$ be a finite subset of $[s, t]$ such that $\{I_x: x\in J\}$ is a cover of $[s,t]$ containing no proper subcover. Let $x_1,x_3,\dots, x_{2n-1}$
be a sequence    of  elements of  $J$ in the increasing   order. Since of   minimality of the cover, for every $i$, $0\le i\le  n-1$ there exists  $x_{2i}\in  I_{2i-1}\cap  I_{2i+1}\cap (x_{2i-1}, x_{2i+1})$.   Let, moreover, $x_0  = s \in  I_{x_1} $ and $x_{2n}    =  t \in  I_{x_{2n-1}}$. In
particular, we have $s = x_0 < x_1 < x_2 <\dots < x_{2n-1}< x_{2n}= t$  and $x_{2i},x_{2i+2}\in  I_{2i+1}$ for every  $i \in \{0,1,\dots,  n-1\}$.     Therefore,
\begin{equation*}\begin{split}
d ( f (\gamma (t)), f (\gamma(s)) ) & \le \sum _{ k = 0}^{2n-1} d ( f (\gamma(x_k)),   f (\gamma(x_{k+1}))
\\& \le\sum _{k = 0}^{2n-1}(m  + \varepsilon)d  ( \gamma(x_k),  \gamma(x_{k+1})
\\& \le  (m  + \varepsilon)\ell(\gamma|_ {[s, t]}),
\end{split}\end{equation*}
justifying the inequality stated at the beginning of the proof.

To  finish   the proof, choose a partition  $T: t_0,t_1,\dots,t_n $ of  $[a,b]$ such that
\begin{equation*}
\ell(  f \circ   \gamma) \le \sum_{i=0}^{n-1} d( f (\gamma(t_{i+1})), f (\gamma(t_i))) +   \varepsilon.
\end{equation*}
By the just   proved inequality we obtain
\begin{equation*}\begin{split}
\ell( f \circ  \gamma)   & \le  \sum _ {i=0}^{n-1} d ( f (\gamma(t_{i+1})), f (\gamma(t_i)) +  \varepsilon
\\&\le \sum _{i=0}^{n-1}(m  + \varepsilon) \ell (\gamma|_{[t_{i-1} , t_i ]}) + \varepsilon
\\& = (m  + \varepsilon) \ell(\gamma) + \varepsilon.
\end{split}\end{equation*}
Since  this holds for every   $\varepsilon>0$,  the desired inequality,    $\ell (f\circ \gamma ) \le m \ell (\gamma)$,   follows.
\end{proof}

Let $\gamma:[a,b]\rightarrow X$ be  a rectifiable path in a metric space $X$,   and let $f$ be  a function on $X$. To a  partition
$T:t_0,t_1,\dots,t_n$  of a segment $[a,b]$ coordinate an  another sequence $S:s_1,s_2,\dots,s_n$, such that $s_i\in  [t_{i-1},t_i ]$ for every
$i$, $1\le i \le n$,  and  denote
\begin{equation*}
\sigma_\gamma  (f,T,S)
= \sum _{i=1}^n f (\gamma (s_i)) \ell(\gamma|_{[t_{i-1},t_i]} ).
\end{equation*}

The Riemann integral $\int_\gamma  f$ is a number (if there  exists a such number) which    satisfies:   for   every  $\varepsilon>0$
there  exists $\delta>0$ such that
\begin{equation*}
\left|\int_\gamma  f   -  \sigma_\gamma (f,T,S) \right|<\varepsilon
\end{equation*}
for every $T\in \mathcal {P}[a,b]$,       $\mathrm {diam}(T)  = \max_{1\le i\le n} |t_i - t_{i-1}|< \delta$  and  every sequence  $S$
coordinated   to $T$.

It  is not hard to show that instead of $\sigma_\gamma  (f,T,S)$ we could  consider
\begin{equation*}
\tilde{\sigma}_\gamma  (f,T,S)
 = \sum _{i=1}^n f (\gamma (s_i)) d(\gamma (t_{i-1}, \gamma (t_i) ),
\end{equation*}
in order to define $\int_\gamma f $.

It is a routine to prove that the following statements are equivalent (actually, the  proof  is almost  the same as in the Euclidean case):

(i) There exists $\int_\gamma f$.

(ii) For every $\epsilon>0$ there exists $\delta>0$ such that $|\sigma_\gamma(f,T,S)-\sigma_\gamma (f,T',S')|  <\varepsilon$ for every partitions $T$, $T'$
such that $\mathrm {diam}(T)<\delta$, $\mathrm {diam}(T')<\delta$, and every sequences $S$   coordinated  to $T$,  and $S'$ coordinated to $T'$.

(iii) For every  $\epsilon>0$ there exist $\delta>0$ such that   $|\sigma_\gamma (f,T,S) -\sigma_\gamma (f,T,S')|  <\varepsilon$ for every partition $T$,
$\mathrm{diam} (T)<\delta$, and every sequences $S$ and $S'$ coordinated to $T$.

If $f$ is a continuous function on a metric space $X$, and if a path $\gamma : [a,b] \rightarrow X$ is rectifiable, then the Riemann integral
$\int_ \gamma f$  exists. Indeed, we will use the third criterion stated above. Let   $\varepsilon>0$ be any number.  Since of uniform continuity for
$f\circ \gamma$   on    $[a,b]$, there exists $\delta>0$ such that $|f\circ\gamma(s)  -f \circ\gamma(s')| <\varepsilon$  if   $|s - s'|< \delta$. For any
partition  $T:  t_0,t_1,\dots,t_n$  of $[a,b]$ with $\mathrm {diam} (T)< \delta$, and sequences  $S:s_1,s_2,\dots,s_n$  and $S':s'_1, s'_2,\dots,s'_n$
coordinated to $T$,  we have
\begin{equation*}\begin{split}
|\sigma(f,T,S)  - \sigma(f,T,S')|  &   = \left|\sum _{i=1}^n f (\gamma (s_i))
 \ell( \gamma|_{ [t_{i-1},t_i] } )-\sum _{i=1}^n f (\gamma ({s}'_i))
\ell( \gamma|_{ [t_{i-1},t_i] }  ) \right|
\\& \le  \sum _{i=1}^n |f (\gamma (s_i)) - f (\gamma ({s}'_i)) |
 \ell( \gamma|_{ [t_{i-1},t_i] } )
\\&   \le\varepsilon  \sum _{i=1}^n    \ell( \gamma|_{ [t_{i-1},t_i] } ) \le \varepsilon \ell(\gamma).
\end{split}\end{equation*}

It is  clear  that if $f_1\le f_2$, then  $\int_\gamma f_1\le \int_\gamma f_2$ provided  that the both integrals exist.

If there exists  $\int_\gamma f$, then we have    $\int_\gamma f = \lim_{n\rightarrow\infty} \sigma_\gamma (f,T_n, S_n)$,  where $(T_n)_{n\in\mathbf{N}}$
is   any  sequence  of partitions of $[a,b]$  such that  $\mathrm {diam} (T_n) \rightarrow 0$, as $n\rightarrow \infty$, and  $S_n$ is a finite sequence
coordinated to  $T_n$ for every $n\in\mathbf{N}$.

If  $[x,y]$  is  a  segment  in a normed space $X$, then   we have
\begin{equation*}
\int _{[x,y]} f= \|x- y\|_X  \int_0^1\tilde{f} (t)dt,\quad \tilde{f} (t) = f ((1-t) x+t y),\,  t\in [0,1],
\end{equation*}
where on the right side is  the  ordinary Riemann integral. Indeed,
\begin{equation*}\begin{split}
&\int _{[x,y]} f
\\&  =  \lim_{\mathrm {diam}(T)\rightarrow 0} \sum_{i=1}^n  f ((1-t_{i-1}) x+t_{i-1} y) \| ((1-t_{i}) x+t_{i} y  ) - ((1-t_{i-1}) x+t_{i-1} y)  \|
\\& =\|x- y \| \lim_{\mathrm {diam}(T)\rightarrow  0} \sum _{i=1}^n f ((1-t_{i-1}) x+t_{i-1} y) | t_i - t_{i-1}|
\\&= \|x- y\|  \int_0^1\tilde{f} (t)dt.
\end{split}\end{equation*}

A metric space $X$ is rectifiable  connected if  for every $x\in X$ and $y\in X$  there  exists  a rectifiable  path  $\gamma: [a,b]\rightarrow X$ that
connects $x$ and $y$, i.e., a such one that  $\gamma ( a)=x$ and $\gamma (b) =  y$.  Note that any  domain in a normed space is rectifiable connected
metric space.

An  everywhere positive and continuous function $\omega$  on a metric space $X$   will  be  called  a  weight  function. If $\gamma$ is a rectifiable path,
its $\omega$-length  is    given by the  Riemann  integral
\begin{equation*}
\ell _ \omega  (\gamma) = \int_\gamma \omega.
\end{equation*}
Note that  in particulary    we  have
\begin{equation*}
\ell _1 (\gamma) = \int_\gamma 1= \ell(\gamma).
\end{equation*}

The $\omega$-distance between     $x\in X$ and $y\in X$  (on the metric  space $X$)  is
\begin{equation*}
d_\omega    (x, y)  =  \inf_\gamma \ell_\omega (\gamma),
\end{equation*}
where  $\gamma$  is among all rectifiable paths  that  connect  $x$ and $y$. It is not hard to  check that $d_\omega$ is a distance  function on $X$.

\section{The equality of the  Bloch and  the  Lipschitz number}
The aim of the  present section is to put the equality statement of Proposition \ref{PR.PAVLOVIC} in a very general setting. We will consider two types of
growth conditions for mappings between metric spaces, and  we will  introduce   the    Bloch number and the Lipschitz number of a mapping. The generalized
result of  Pavlovi\'{c}, given in  Theorem \ref{TH.PAVLOVIC.MS},    says  that these numbers  are equal.

Let us first introduce the     Bloch type  growth  condition and the Bloch  number of a mapping. Let  $\omega$ and $\tilde{\omega}$ be weight functions on
metric  spaces $X$ and $Y$, respectively. Consider a mapping $f :X\rightarrow  Y$  such  that $d^\ast _ f(x)$ is finite for every  $x\in X$, and  which
satisfies  the  growth   condition of the following form  ${\tilde{\omega}  (f( x ))} d^\ast _ f(x)\le  m {\omega(x)}$ for  every   $x\in  X$, where $m$ is
a positive number.  Introduce       the following quantity
\begin{equation*}
\mathfrak  {B}_f = \sup_{x\in X} \frac  {\tilde{\omega} (f(x))}{\omega(x)} d^\ast _f(x),
\end{equation*}
which is  called  the Bloch number  of the mapping $f$  with respect to the weights $\omega$ and $\tilde{\omega}$. Note that if $f$   satisfies the Bloch
type growth  condition, then $d^\ast_f$ is bounded on compact subsets of $X$. This follows since  $\omega$  and   $\tilde{\omega}$ are continuous.

Introduce now the  Lipschitz   type growth condition and the Lipschitz  number of a mapping between metric spaces. Here  we consider mappings
$f:X\rightarrow Y $   that satisfy the growth  condition   of the following form  $d (f(x),f(y))\le m(x,y) d  (x,y)$, $x,\, y\in X$  for  a positive
function $m(x,y)$ on $X\times X$.

For  weight  functions    $\omega$ on  $X$, and $\tilde{\omega}$ on $Y$,  and a mapping $f:X\rightarrow   Y$  consider an everywhere
positive function   $\Psi_f $ on  $X\times   X$  such  that it   satisfies the  following  conditions:

\noindent (1)
\begin{equation*}
\Psi_f (x,y)  =   \Psi_f (y,x),\quad   x,\,  y\in X;
\end{equation*}

\noindent (2)
\begin{equation*}
\Psi_f (x,x) = \frac{\tilde{\omega} (f(x))} {\omega(x)},\quad    x\in X;
\end{equation*}

\noindent (3)
\begin{equation*}
\liminf_{y\rightarrow x} \Psi_f(x,y)\ge \Psi_f (x,x),\quad   x\in X;
\end{equation*}

\noindent (4)
 \begin{equation*}\Psi_f(x,y)  \frac{d(f(x), f(y))}{d(x,y)}
\le  \frac {d_{\tilde{\omega}}(f(x),f(y))}   {d_\omega(x,y)},\quad  (x,y)\in \Delta_X^c.
\end{equation*}

\noindent  We say that $\Psi_f$ is an admissible for the mapping $f$ with respect to the weight functions $\omega$ and $\tilde{\omega}$. Note that if
$\Psi_f(x,y)$ is not symmetric,   but satisfies all other conditions stated above, we  can replace it  by the  symmetric function
\begin{equation*}
\tilde{\Psi}_f  (x,y) = \max\{ \Psi_f(x,y),\Psi_f(y,x)\},\quad   x,\, y\in X.
\end{equation*}
This new function will  be  also admissible for $f$,  as it is  easy to  check.

Let us   note that if    we set $\tilde{\omega}\equiv 1$, then the distance $d_{\tilde{\omega}}$   is   in some cases equal to   the metric on $Y$. For
example, this  occurs in   normed spaces.  In this case  the fourth condition  of  admissibility is independent of the function $f$, and one has to find
an universal admissible function (that depend only on the weights) $\Psi(x,y)= \Psi_f(x,y)$  which satisfies the following  simplified condition

\noindent (4')
\begin{equation*}
\Psi_f(x,y) {d_\omega(x,y)}\le  d (x,y),\quad   x,\, y\in X.
\end{equation*}

Introduce  now  the  following    quantity
\begin{equation*}
\mathfrak{L}_f  = \sup _{(x,y)\in \Delta_X^c }  \Psi_f(x,y)\frac {d(f(x),f(y))}{d(x,y)}
\end{equation*}
where $\Psi_f$  is any admissible for  $f$ with respect  to $\omega $ and $\tilde{\omega}$. We call it the Lipschitz number of  $f$. The result stated
below  says  that the Lipschitz number  does not depend on the choice of an admissible  function for mappings $f$. Therefore,  the definition of the
Lipschitz number  $\mathfrak{L}_f$    is   correct.

We will now prove our main result in this section.

\begin{theorem}\label{TH.PAVLOVIC.MS}
Let $X$ and $Y$ be rectifiable  connected  metrics spaces with weight functions $\omega$ and $\tilde{\omega}$, respectively. Let  $f:X \rightarrow Y$
be a mapping such that for every $x\in X$ there exists $d^\ast _f (x)$, and let $\Psi_f$  be an  admissible  function for the mapping $f$ with respect
to the weights $\omega$ and $\tilde{\omega}$. If the Bloch number  $\mathfrak{B}_f$,  or the  Lipschitz number $\mathfrak{L}_f$,  is  finite, then the
both  numbers are  finite,   and $\mathfrak{B}_f  =  \mathfrak{L}_f$. Therefore,  the Lipschitz number  is  independent of the choice of an admissible
function $\Psi_f$,  and it may   be   expressed in the differential-free way
\begin{equation*}
\mathfrak{B}_f = \sup_{(x,y)\in\Delta_X^c} \Psi_f (x,y )   \frac{d(f(x),f(y))}{d( x,y)}.
\end{equation*}
\end{theorem}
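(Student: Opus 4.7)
The plan is to prove the two inequalities $\mathfrak{B}_f \le \mathfrak{L}_f$ and $\mathfrak{L}_f \le \mathfrak{B}_f$ unconditionally; the statement follows at once, since finiteness of either side combined with the inequality forces finiteness of the other, and together they give equality. For $\mathfrak{B}_f \le \mathfrak{L}_f$, fix $x\in X$ and, using the $\limsup$ definition of $d^*_f(x)$, pick a sequence $y_n\to x$, $y_n\ne x$, with $d(f(x),f(y_n))/d(x,y_n)\to d^*_f(x)$. Since $(x,y_n)\in\Delta^c_X$, we have $\Psi_f(x,y_n)\,d(f(x),f(y_n))/d(x,y_n)\le \mathfrak{L}_f$. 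Passing to $\liminf$ and using admissibility conditions (2) and (3), which give $\liminf_n \Psi_f(x,y_n)\ge \tilde{\omega}(f(x))/\omega(x)$, one obtains $\tilde{\omega}(f(x))\,d^*_f(x)/\omega(x)\le \mathfrak{L}_f$; a supremum over $x$ completes this direction.

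For $\mathfrak{L}_f\le \mathfrak{B}_f$, assume $\mathfrak{B}_f<\infty$ (otherwise trivial) and fix $(x,y)\in\Delta^c_X$. Admissibility condition~(4) reduces the goal to
\[
d_{\tilde{\omega}}(f(x),f(y))\le \mathfrak{B}_f\,d_\omega(x,y).
\]
Writing $d_\omega(x,y)=\inf_\gamma \int_\gamma \omega$ over rectifiable paths $\gamma:[a,b]\to X$ from $x$ to $y$, and noting that $f$ is locally Lipschitz (hence continuous) and $d^*_f\le \mathfrak{B}_f\,\omega/(\tilde{\omega}\circ f)$ is bounded on the compact set $\gamma([a,b])$, Lemma~\ref{LE.IMAGE} makes $f\circ\gamma$ rectifiable; thus $d_{\tilde{\omega}}(f(x),f(y))\le \int_{f\circ\gamma}\tilde{\omega}$, and it suffices to prove the key inequality
\[
\int_{f\circ\gamma}\tilde{\omega}\le \mathfrak{B}_f\int_\gamma \omega. \qquad (\ast)
\]

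To prove $(\ast)$, approximate the left integral by the $\tilde{\sigma}$ Riemann sum and the right by the $\sigma$ sum. For a fine partition $T:\,a=t_0<\cdots<t_n=b$, set $M_i:=\sup_{u\in[t_{i-1},t_i]} d^*_f(\gamma(u))$; Lemma~\ref{LE.IMAGE} applied to $\gamma|_{[t_{i-1},t_i]}$ with $m=M_i$ yields
\[
d\bigl(f(\gamma(t_{i-1})),f(\gamma(t_i))\bigr)\le M_i\,\ell(\gamma|_{[t_{i-1},t_i]}).
\]
Given $\varepsilon>0$, choose the sample $s_i\in[t_{i-1},t_i]$ with $d^*_f(\gamma(s_i))>M_i-\varepsilon$, and use $\tilde{\omega}(f(z))\,d^*_f(z)\le \mathfrak{B}_f\,\omega(z)$ at $z=\gamma(s_i)$ to obtain
\[
\tilde{\omega}(f(\gamma(s_i)))\,d\bigl(f(\gamma(t_{i-1})),f(\gamma(t_i))\bigr)\le \bigl(\mathfrak{B}_f\,\omega(\gamma(s_i))+\varepsilon\,\tilde{\omega}(f(\gamma(s_i)))\bigr)\,\ell(\gamma|_{[t_{i-1},t_i]}).
\]
Summing in $i$, the left side is a $\tilde{\sigma}$ sum for $\int_{f\circ\gamma}\tilde{\omega}$ and the first term on the right is $\mathfrak{B}_f$ times a $\sigma$ sum for $\int_\gamma\omega$; the second is bounded by $\varepsilon\cdot \sup_{\gamma([a,b])}(\tilde{\omega}\circ f)\cdot \ell(\gamma)$. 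Sending $\mathrm{diam}(T)\to 0$ and then $\varepsilon\to 0$ yields $(\ast)$.

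The principal obstacle is precisely this last passage from the pointwise Bloch bound to the integrated inequality: $d^*_f\circ\gamma$ need not be continuous, so its value at a Riemann sample $s_i$ does not a priori control the displacement $d(f(\gamma(t_{i-1})),f(\gamma(t_i)))$. Bounding that displacement by $M_i$ through a local application of Lemma~\ref{LE.IMAGE} on each subinterval, and then selecting $s_i$ to nearly attain $M_i$, is the device that lets the weight $\tilde{\omega}(f(\gamma(s_i)))$ combine with $d^*_f$ via the pointwise bound and produce $(\ast)$ cleanly in the limit.
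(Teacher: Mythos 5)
Your proof is correct and follows essentially the same route as the paper: the direction $\mathfrak{B}_f\le\mathfrak{L}_f$ via the $\liminf$/$\limsup$ admissibility conditions at the diagonal, and the direction $\mathfrak{L}_f\le\mathfrak{B}_f$ via condition (4) reduced to $d_{\tilde{\omega}}(f(x),f(y))\le\mathfrak{B}_f\,d_\omega(x,y)$, which both you and the paper obtain from Lemma \ref{LE.IMAGE} together with the key Riemann-sum comparison $\int_{f\circ\gamma}\tilde{\omega}\le\mathfrak{B}_f\int_\gamma\omega$. The only (harmless) difference lies in that comparison: the paper uses uniform continuity of $\mathfrak{B}_f\,\omega/(\tilde{\omega}\circ f)$ along the compact path and samples at left endpoints, whereas you take the supremum $M_i$ of $d^\ast_f\circ\gamma$ on each subinterval and pick sample points nearly attaining it; both devices produce the same limit.
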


\begin{proof}
For one direction, assume that the Lipschitz number of  a function $f$ is finite, i.e.,  assume  that
\begin{equation*}
\mathfrak {L}_f   =   \sup _{(x,y)\in  \Delta_X^c } \Psi_f(x,y) \frac {d (f(x),f(y)) }{d(x,y)}
\end{equation*}
is finite. We are going to show that $\mathfrak {B}_f\le \mathfrak {L}_f$,  which  implies that the Bloch number $\mathfrak{B}_f$ is  also finite.

If $A (\zeta)$  and  $B (\zeta)$ are nonnegative functions on  a metric space,  then we have
\begin{equation*}
\limsup _{\zeta\rightarrow  \eta } A (\zeta) B (\zeta)  \ge  \liminf_{\zeta\rightarrow  \eta }   A (\zeta)
\limsup_{\zeta\rightarrow  \eta } B(\zeta).
\end{equation*}
We will use this fact below.

For every  $x\in X$ we have
\begin{equation*}\begin{split}
\mathfrak{L}_f&  =  \sup _{   (y,  z)\in \Delta_X^c }  \Psi_f(y,z)\frac {d(f(y), f(z))}{d(y,z)}\ge
\limsup_{z \rightarrow x} \Psi_f(x,z) \frac {d (f(x) , f(z))}{d(x,z)}
\\& \ge \liminf_{z\rightarrow x} \Psi_f(x,z) \limsup_{z\rightarrow x}   \frac {d (f(x) , f(z))}{d(x,z)}  \ge \Psi_f(x,x) d^\ast_f(x)
 = \frac {\tilde { \omega}(f(x))}{\omega(x)}  d^\ast_f(x).
\end{split}\end{equation*}
It follows that
\begin{equation*}
\mathfrak{L}_f \ge\sup_{x\in X}   \frac {\tilde { \omega}(f(x))}{\omega(x)}  d^\ast_f(x) = \mathfrak{B}_f,
\end{equation*}
which we aimed to prove.

Assume now  that the  Bloch number  $\mathfrak{B}_f$  of a mapping $f$ is finite,  and form the sake of  simplicity  let $\mathfrak{B}_f = m $,  i.e., let
$\tilde{\omega}(f(x))d^\ast_ f (x) \le m\omega(x)$,   $x\in X$. We will first  prove  that in this case we  have
\begin{equation*}
d_{\tilde{\omega}} (f(x),f(y)) \le m  d_\omega (x, y),\quad  x,\, y\in X.
\end{equation*}
Let $\gamma:[a,b] \rightarrow  X$ be any rectifiable path connecting $x\in X$ and $y\in X$, i.e.,  such that $\gamma(a)= x$ and $\gamma(b) = y $. Consider
the path $\delta = f \circ \gamma\subseteq Y$. This  path  connects $f(x)$ and $f(y)$. Since $d^\ast _f(x)$ is finite for every $x\in X$,
by Lemma \ref{LE.IMAGE}, the  path $\delta = f\circ\gamma$  is   also  a rectifiable  one. Therefore, $\int _ \delta {\tilde{\omega}}$ is a finite number.

Let  $\varepsilon> 0$ be arbitrary. Since the function
\begin{equation*}
g(z) =\frac  {m  \omega(z)}{\tilde{\omega} (f(z))} ,\quad  z\in X
\end{equation*}
is  continuous on $X$,  and since any  path  is a compact set, we may found a partition $T:t_0,t_1,\dots,t_n$ of the segment $[a,b]$ such  that
$\mathrm {diam}  (T) <\varepsilon$,  and
\begin{equation*}
|g(\gamma (t)) - g(\gamma (t_{i-1} ) )|<\varepsilon, \quad t \in [t_{i-1},t_{i}],\,   1\le  i\le  n.
\end{equation*}
Since  $g(\gamma (t)) < g(\gamma (t_{i-1}))+\varepsilon$, it follows
\begin{equation*}
d^\ast _f (\gamma (t))\le  g (\gamma (t)) < g(\gamma (t_{i-1}))+\varepsilon, \quad  t\in [t_{i-1},t_i], \, 1\le i\le n.
\end{equation*}
Let $S$ be the sequence  $t_0,t_1,\dots, t_{n-1}$. Since $\gamma$   is compact, there   exists a constant $C$ such that $\tilde{\omega} (f ( x)) \le  C$,
$x \in  \gamma$. Now,   we have
\begin{equation*}\begin{split}
&\sigma _ {\delta}  (\tilde {\omega}, T, S)\\& = \sum _ {i=1}^n  \tilde { \omega}(f(\gamma( t_{i-1} )))\ell(\delta|_{[t_{i-1},t_i]})
\le\sum_{i=1}^n\tilde { \omega}(f(\gamma( t_{i-1} ))) (g(\gamma (t_{i-1}))+\varepsilon)  \ell(\gamma|_{[t_{i-1},t_i]})
\\& \le \sum _ {i=1}^n  \tilde { \omega}(f(\gamma( t_{i-1} )))  g(\gamma (t_{i-1})) \ell(\gamma|_{[t_{i-1},t_i]})
+ \varepsilon \sum _ {i=1}^n  \tilde { \omega}(f(\gamma( t_{i-1} )))  \ell (\gamma|_{[t_{i-1},t_i]})
\\& \le \sum _ {i=1}^n  \tilde { \omega}(f(\gamma( t_{i-1} )))
 \frac  {m  \omega(\gamma( t_{i-1} ))}{\tilde{\omega} (f(\gamma( t_{i-1} )))}
 \ell(\gamma|_{[t_{i-1},t_i]})
+ \varepsilon  C \sum _ {i=1}^n    \ell(\gamma|_{[t_{i-1},t_i]})
\\& \le m \sum _ {i=1}^n   {\omega(\gamma( t_{i-1} ))} \ell(\gamma|_{[t_{i-1},t_i]})
+ \varepsilon  C    \ell(\gamma)
=   m \sigma_\gamma(\omega, T,S)  + \varepsilon  C     \ell(\gamma)
\end{split}\end{equation*}
If we let   $\varepsilon \rightarrow  0$ above, we obtain $\int_\delta \tilde {\omega} \le m \int_\gamma \omega$.  Now,  if we  take the infimum over all
paths  $\gamma$,  we obtain $\inf_{\gamma} \int_{f\circ \gamma} {\tilde{\omega}} \le  m  d_ \omega  (x,y)$.  If $\delta$  is among all paths  that
connect   $f(x)$ and $f(y)$, we have
\begin{equation*}
d_{\tilde{\omega}} (f(x),f(y)) = \inf _\delta \int _ \delta {\tilde{\omega}} \le \inf_\gamma \int_{f\circ \gamma}{\tilde{\omega}}\le m d_\omega (x, y).
\end{equation*}
Therefore,   we  have proved that aimed inequality.

We will prove now the reverse inequality  $\mathfrak{L}_f\le\mathfrak{B}_f$, which in particular implies  that  the Lipschitz number $\mathfrak{L}_f$ is
also  finite.  By the  above we have ${d_{\tilde{\omega}} (f(x),  f(y))}\le \mathfrak{B}_f d_{\omega} (x,y)$ for  every  $x\in X$ and  $y\in X$. Applying
now   conditions  posed     on     the  admissible  function  $\Psi_f$,   we  obtain
\begin{equation*}
\Psi_f(x,y)\frac {d(f(x),f(y)) }{d (x,y)} \le \frac {d_{\tilde{\omega}} (f(x), f(y))}{d_{\omega}(x,y)}\le  \mathfrak{B}_f, \quad (x,y)\in  \Delta_X^c.
\end{equation*}
It follows that
\begin{equation*}
\mathfrak{L}_f  =  \sup_{(x,y)\in \Delta_X^c} \Psi_f(x,y)\frac {d (f(x),f(y))}{d(x,y)}  \le \mathfrak{B}_f,
\end{equation*}
which we needed  to prove.
\end{proof}

In order to apply Theorem \ref{TH.PAVLOVIC.MS} for normed spaces, we will need the following auxiliary result.

\begin{lemma}\label{LE.LIM}
Let $X$ be a  normed space, let $\Omega\subseteq X$ be a  domain, and let $\omega$ a weight function on $\Omega$. For $\omega$-distance $d_\omega$ on
$\Omega$,   we have
\begin{equation*}
\lim _{y\rightarrow  x}\frac { d_\omega(x,y)} {\|x-y\|}  = \omega (x), \quad x\in  \Omega.
\end{equation*}
\end{lemma}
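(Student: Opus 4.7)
The plan is to establish the two inequalities $\limsup_{y\to x} d_\omega(x,y)/\|x-y\| \le \omega(x)$ and $\liminf_{y\to x} d_\omega(x,y)/\|x-y\| \ge \omega(x)$ separately, then combine them.

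For the upper bound I would use the straight segment $[x,y]$ as a test path. Since $\Omega$ is open, for $y$ sufficiently close to $x$ the segment lies entirely in $\Omega$ and is therefore admissible in the infimum defining $d_\omega(x,y)$. The explicit formula for the Riemann integral over a segment established in the preliminaries gives
\begin{equation*}
\ell_\omega([x,y]) = \|x-y\|\int_0^1 \omega((1-t)x + ty)\,dt,
\end{equation*}
so dividing by $\|x-y\|$ and letting $y\to x$, continuity of $\omega$ at $x$ forces the integrand to converge uniformly in $t\in[0,1]$ to $\omega(x)$, yielding the upper estimate.

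For the lower bound I would fix $\varepsilon>0$, then use continuity of $\omega$ together with openness of $\Omega$ to choose $\delta>0$ with $\overline{B_X(x,\delta)}\subseteq\Omega$ and $\omega(z)\ge \omega(x)-\varepsilon$ for every $z\in\overline{B_X(x,\delta)}$. For arbitrary $y$ with $0<\|x-y\|<\delta$ and any rectifiable $\gamma:[a,b]\to\Omega$ connecting $x$ and $y$, I would split into two cases. If $\gamma\subseteq\overline{B_X(x,\delta)}$, then $\omega\ge \omega(x)-\varepsilon$ along $\gamma$, so $\ell_\omega(\gamma)\ge (\omega(x)-\varepsilon)\ell(\gamma)\ge (\omega(x)-\varepsilon)\|x-y\|$, where the last step uses the general fact $\ell(\gamma)\ge d(\gamma(a),\gamma(b))$ recorded in the preliminaries. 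Otherwise, let $t_1\in(a,b]$ be the first time with $\|\gamma(t_1)-x\|=\delta$, guaranteed to exist by continuity; the subpath $\gamma_1=\gamma|_{[a,t_1]}$ lies in $\overline{B_X(x,\delta)}$ and connects $x$ to a point at distance exactly $\delta$, hence $\ell(\gamma_1)\ge\delta$ and $\ell_\omega(\gamma)\ge\ell_\omega(\gamma_1)\ge (\omega(x)-\varepsilon)\delta > (\omega(x)-\varepsilon)\|x-y\|$. Taking the infimum over all $\gamma$ gives $d_\omega(x,y)\ge (\omega(x)-\varepsilon)\|x-y\|$, and the arbitrariness of $\varepsilon$ completes the argument.

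The only subtle point is the lower bound, since a rectifiable path from $x$ to a nearby $y$ could in principle wander far away from $x$ where no uniform lower estimate on $\omega$ is available. The case split above is exactly what resolves this: paths staying inside the prescribed ball are controlled by the local continuity bound, while paths leaving the ball are automatically long enough that their $\omega$-length, even using only the local estimate on the initial portion, already dominates $(\omega(x)-\varepsilon)\|x-y\|$ once $\|x-y\|<\delta$. No global properties of $\omega$ and no extra hypotheses on $\Omega$ beyond being a domain are needed.
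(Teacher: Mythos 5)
Your proposal is correct and follows the same overall strategy as the paper: the upper estimate comes from testing with the segment $[x,y]$ and the formula $\int_{[x,y]}\omega=\|x-y\|\int_0^1\omega((1-t)x+ty)\,dt$, and the lower estimate comes from the local bound $\omega\ge\omega(x)-\varepsilon$ near $x$. The one place where you genuinely go beyond the paper is the lower bound: the paper simply writes $\inf_\gamma\int_\gamma\omega\ge(\omega(x)-\varepsilon)\|x-y\|$, which is only immediate for competitors $\gamma$ that stay inside the ball on which the bound $\omega\ge\omega(x)-\varepsilon$ holds; a path could in principle escape to a region where $\omega$ is tiny. Your case split --- paths confined to $\overline{B_X(x,\delta)}$ are handled by $\ell_\omega(\gamma)\ge(\omega(x)-\varepsilon)\ell(\gamma)\ge(\omega(x)-\varepsilon)\|x-y\|$, while paths that exit must first traverse an initial arc of length at least $\delta>\|x-y\|$ inside the ball, so their $\omega$-length already exceeds $(\omega(x)-\varepsilon)\|x-y\|$ --- is exactly the missing justification, and it uses only the subadditivity $\ell_\omega(\gamma)\ge\ell_\omega(\gamma|_{[a,t_1]})$ and the elementary inequality $\ell(\gamma)\ge d(\gamma(a),\gamma(b))$ recorded in the preliminaries. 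So your argument is not a different route but a complete version of the paper's route; no changes are needed.
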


\begin{proof}
Since  $\omega $ is continuous, there exists an open ball $B_X(x,r)\subseteq \Omega$ such that
\begin{equation*}
0<\omega(x)-\varepsilon<\omega (y)<\omega(x)+\varepsilon,\quad y\in B_X(x,r),
\end{equation*}
where $\varepsilon> 0$  is  a  sufficiently small   number. Now,  we have
\begin{equation*}
d_\omega (x,y)  \le \int_{[x,y]}  \omega  \le   (\omega (x) +\varepsilon )\ell ([x,y] ) =  (\omega (x) +\varepsilon ) \|x-y\|.
\end{equation*}
On the other hand, if $\gamma\subseteq \Omega$ is among paths that connect   $x$  and $y$, then
\begin{equation*}
d_{\omega} (x,y) = \inf_\gamma \int_\gamma \omega\ge (\omega (x) - \varepsilon)\|x-y\|.
\end{equation*}
Therefore,
\begin{equation*}
\omega(x) - \varepsilon  \le \frac{d_\omega (x,y)}{\|x-y\|} \le \omega(x) + \varepsilon,\quad    y\in B_X(x,r).
\end{equation*}
This means that
\begin{equation*}
\lim _{y\rightarrow  x}\frac { d_\omega(x,y)} {d (x,y)} = \omega (x).
\end{equation*}
\end{proof}

By Lemma \ref{LE.LIM} we have that the topology on $\Omega $ induced by the $\omega$-distance $d_\omega$ is the same as the topology
induced by the norm on $X$.  For similar results we  refer to \cite{ZHU.JLMS}.

\begin{corollary}\label{COR.M}   Let $X$ and $Y$ be normed spaces, and  let  $\omega$   and $\tilde{\omega}$  be weight functions on
domains  $\Omega\subseteq X$ and $\tilde{\Omega}\subseteq Y$,  respectively. The Bloch number
\begin{equation*}
\mathfrak{B}_f      =   \sup_{x\in \Omega} \frac {\tilde{\omega} (f(x))} {\omega (x)} d^\ast_ f(x)
\end{equation*}
of a mapping $f:\Omega\rightarrow\tilde{\Omega}$  for which        $d_f^\ast(x)$  is finite for every   $x\in \Omega$,   is equal to
\begin{equation*}
\mathfrak{B}_f   = \sup_{(x,y)\in \Delta^c_\Omega}  \frac{d_{\tilde{\omega}} (f(x), f(y))} {d_\omega  (x,y)}.
\end{equation*}
\end{corollary}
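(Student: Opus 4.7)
The plan is to realize Corollary \ref{COR.M} as an immediate application of Theorem \ref{TH.PAVLOVIC.MS} to the normed-space setting, with a specific admissible function $\Psi_f$ chosen so that its Lipschitz number $\mathfrak{L}_f$ is precisely the right-hand side of the claimed identity. The natural candidate is
\begin{equation*}
\Psi_f(x,y)=\frac{d_{\tilde\omega}(f(x),f(y))}{d_\omega(x,y)}\cdot\frac{\|x-y\|}{\|f(x)-f(y)\|}
\end{equation*}
when $x\ne y$ and $f(x)\ne f(y)$; I extend by $\Psi_f(x,x)=\tilde\omega(f(x))/\omega(x)$ on the diagonal, and on the residual set $\{(x,y):x\ne y,\,f(x)=f(y)\}$ I set $\Psi_f(x,y)=\max\{\tilde\omega(f(x))/\omega(x),\tilde\omega(f(y))/\omega(y)\}$.

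Conditions (1), (2) and (4) of admissibility will then hold essentially by construction: symmetry is built in, the prescribed diagonal value matches (2), and condition (4) holds with equality on the non-degenerate set and trivially (as $0\le 0$) on the degenerate one. The actual work is condition (3), the liminf inequality $\liminf_{y\to x}\Psi_f(x,y)\ge\Psi_f(x,x)$, and this is where Lemma \ref{LE.LIM} is the decisive tool. Finiteness of $d_f^\ast(x)$ implies $\|f(y)-f(x)\|\le(d_f^\ast(x)+1)\|y-x\|$ on a punctured neighbourhood of $x$, so $f(y)\to f(x)$. Along any sequence $y_n\to x$ with $f(y_n)\ne f(x)$, applying Lemma \ref{LE.LIM} at $x\in\Omega$ and at $f(x)\in\tilde\Omega$ gives
\begin{equation*}
\frac{d_\omega(x,y_n)}{\|x-y_n\|}\to\omega(x),\qquad \frac{d_{\tilde\omega}(f(x),f(y_n))}{\|f(x)-f(y_n)\|}\to\tilde\omega(f(x)),
\end{equation*}
so $\Psi_f(x,y_n)\to\tilde\omega(f(x))/\omega(x)=\Psi_f(x,x)$. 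For sequences with $f(y_n)=f(x)$, continuity of $\omega$ and $\tilde\omega$ makes the chosen maximum converge to the same limit, and arbitrary sequences split into these two subsequences, yielding (3).

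With admissibility established, Theorem \ref{TH.PAVLOVIC.MS} gives $\mathfrak{B}_f=\mathfrak{L}_f$, and the very definition of $\Psi_f$ produces
\begin{equation*}
\mathfrak{L}_f=\sup_{(x,y)\in\Delta^c_\Omega}\Psi_f(x,y)\frac{\|f(x)-f(y)\|}{\|x-y\|}=\sup_{(x,y)\in\Delta^c_\Omega}\frac{d_{\tilde\omega}(f(x),f(y))}{d_\omega(x,y)},
\end{equation*}
which is exactly the claimed identity. I expect the main obstacle to be the edge case $f(x)=f(y)$ for $x\ne y$, where the natural formula degenerates to $0/0$: one must supply a compatible positive value that simultaneously preserves symmetry, respects the liminf condition (3), and does not disturb the Lipschitz number. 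The symmetric maximum above is tailored to do exactly this, and its convergence on the diagonal is what ensures Theorem \ref{TH.PAVLOVIC.MS} applies cleanly.
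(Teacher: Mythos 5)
Your proof is correct and follows essentially the same route as the paper: both take the explicit admissible function $\Psi_f(x,y)=\frac{d_{\tilde\omega}(f(x),f(y))}{d_\omega(x,y)}\cdot\frac{\|x-y\|}{\|f(x)-f(y)\|}$ off the degenerate set, verify the liminf condition (3) via Lemma \ref{LE.LIM} (applied both in $\Omega$ and, using $f(y)\to f(x)$, in $\tilde\Omega$), and then invoke Theorem \ref{TH.PAVLOVIC.MS}. The only, immaterial, difference is the value assigned when $x\ne y$ but $f(x)=f(y)$: the paper uses $\tilde\omega(f(x))\,\|x-y\|/d_\omega(x,y)$ where you use the symmetric maximum, and both choices satisfy all four admissibility conditions.
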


\begin{proof}
Indeed,  an  admissible   function $\Psi _f$   is given by
\begin{equation*}
\Psi  _ f(x,y) =
\begin{cases}
\frac{ d_{\tilde{\omega}}(f(x),f(y))}{d(f(x), f(y))}/
\frac {d_\omega(x,y)}{d (x,y)}, & \mbox{if}\ x\ne y,f(x)\ne f(y); \\
 \tilde{\omega} (f(x)) /   \frac {d_\omega(x,y)}{d (x,y)}, & \mbox{if}\ x\ne y, f(x)=f(y) ;\\
{\tilde{\omega} (f(x))} /{\omega(x)},  & \mbox{if}\ x=y,f(x)=f(y).
\end{cases}
\end{equation*}
Having in mind Lemma \ref{LE.LIM} it follows that
\begin{equation*}
\liminf_{y\rightarrow x} \Psi_f (x,y) = \lim_{y\rightarrow x} \Psi_f (x,y) =\frac{\tilde{\omega} (f(x))} {\omega(x)} =  \Psi _f(x,x).
\end{equation*}
Other           three admissability    conditions for $\Psi_f$ are obviously satisfied.
\end{proof}

In other words, the corollary says that  a  mapping $f:X\rightarrow Y$ satisfies the  condition   $\tilde {\omega} (f(x)) d^\ast_f (x)\le m
\omega (x)$,    $x\in X$, where  $m$ is  a  constant,   if and only if  $d_{\tilde {\omega} } (f(x),f(y))\le m  d_\omega (x,y)$, $x,y\in X$.

\section{Admissible functions in some special cases}
Having in mind  Lemma \ref{LE.NORMED.DIFF}, as an immediate consequence of Theorem \ref{TH.PAVLOVIC.MS} (and  special case of Corollary \ref{COR.M}) we
have

\begin{proposition}
Let $\omega$  be a  weight function on   a  domain   $\Omega$  in  the  normed space $X$ and let $d_\omega$ be the $\omega$-distance on $\Omega$. For a
Fr\'{e}chet  differentiable  mappings  $f:\Omega\rightarrow Y$. The Bloch type semi-norm
\begin{equation*}
\|f\|_{\mathfrak{B}, \omega}  =    \sup_{x\in \Omega} \frac {\|d _ f(x)\|} {\omega (x) }
\end{equation*}
is equal to
\begin{equation*}
\|f\|_{\mathfrak{B}, \omega}  = \sup_{(x,y)\in \Delta^c_\Omega}  \frac{\|f(x) - f(y)\|_Y} {d_\omega  (x,y)}
\end{equation*}
\end{proposition}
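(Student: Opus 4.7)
The plan is to invoke Corollary \ref{COR.M} with the target normed space $Y$ equipped with the constant weight $\tilde{\omega}\equiv 1$, combined with Lemma \ref{LE.NORMED.DIFF} to reinterpret the pointwise Lipschitz number $d_f^\ast(x)$ as the operator norm $\|d_f(x)\|$. The proposition is essentially a direct specialisation of the general distance identity proved in the previous section, so the proof should consist of verifying that the hypotheses of Corollary \ref{COR.M} are met and then identifying the two sides of its conclusion with the two sides of the present statement.

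First I would apply Lemma \ref{LE.NORMED.DIFF}: since $f$ is Fr\'echet differentiable at every $x\in\Omega$, we have $d_f^\ast(x) = \|d_f(x)\|$, which in particular is finite at every point. Thus the hypothesis of Corollary \ref{COR.M} is satisfied, taking $\tilde{\Omega}=Y$ itself (a domain in the normed space $Y$) and $\tilde{\omega}\equiv 1$. Next I would identify the $\tilde{\omega}$-distance on $Y$ corresponding to the constant weight $1$: the straight segment $[y_1,y_2]$ already achieves length $\|y_1-y_2\|_Y$, while any rectifiable path $\delta$ joining $y_1$ to $y_2$ satisfies $\ell_1(\delta)=\ell(\delta)\ge \|y_1-y_2\|_Y$ by the inequality $\ell(\gamma)\ge d(\gamma(a),\gamma(b))$ recorded in the Preliminaries. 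Therefore $d_1(y_1,y_2) = \|y_1-y_2\|_Y$.

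Substituting these two identifications into the conclusion of Corollary \ref{COR.M} gives
\[
\sup_{x\in\Omega}\frac{\|d_f(x)\|}{\omega(x)} \;=\; \mathfrak{B}_f \;=\; \sup_{(x,y)\in\Delta^c_\Omega}\frac{\|f(x)-f(y)\|_Y}{d_\omega(x,y)},
\]
which is precisely the stated equality. There is no genuine obstacle in the argument: the substantive work has already been carried out in Theorem \ref{TH.PAVLOVIC.MS} and Corollary \ref{COR.M}, and what remains are the two identifications above, each of which is routine once the relevant definitions are unravelled.
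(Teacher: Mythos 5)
Your proposal is correct and follows essentially the same route as the paper: the author likewise presents this proposition as an immediate consequence of Theorem \ref{TH.PAVLOVIC.MS} (as a special case of Corollary \ref{COR.M}) with $\tilde{\omega}\equiv 1$, using Lemma \ref{LE.NORMED.DIFF} to identify $d_f^\ast(x)$ with $\|d_f(x)\|$ and the remark from Section 3 that $d_{\tilde{\omega}}$ reduces to the norm metric on $Y$ when $\tilde{\omega}\equiv 1$. The only cosmetic difference is that the paper additionally records the explicit admissible function $\Psi(x,y)=\|x-y\|/d_\omega(x,y)$ for $x\ne y$ and $\Psi(x,x)=1/\omega(x)$, which your appeal to Corollary \ref{COR.M} implicitly supplies.
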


Let us note that an  admissible  function  here is   independent on $f$, and  is given by
\begin{equation*}
\Psi   (x,y) =
\begin{cases}
{\|x - y \|}/ {d_\omega(x,y)}, & \mbox{if}\ x\ne y ;\\
1 /{\omega(x)},  & \mbox{if}\ x=y.
\end{cases}
\end{equation*}

Since the explicit expression for the   distance   function $d_\omega$ is  difficult to find in general, the aim of the rest of this section  is to find
simple  admissible  functions in some   special cases. We do that for  weights that are derivative of an  operator monotone function on an interval  and
in the case when   weights are monotone in norm.

\subsection{The case of weights that are derivatives of  operator   monotone functions}
We will start this subsection with preliminaries on operator monotone function on an open interval $I\subseteq\mathbf{R}$. The class of operator monotone
functions on the interval $I$, denoted by $\mathrm {OM}(I)$, contains all real-valued functions $\varphi$ on $I$  that preserve the Hilbert space operator
ordering in the following sense: let $A$ and $B$  be  self-adjoint operators  on a Hilbert space  with their  spectra contained  in the interval  $I$, and
satisfying  $A \le  B$, then $\varphi(A)\le\varphi(B)$.         For  a more in  operator monotone  functions we refer to the  classical reference book
\cite{DONOGHUE.BOOK},  and to  the  recent  one \cite{HIAI.INTER}. We point out that  an operator monotone function on  $I$ is continuously differentiable
on $I$, and $\varphi '(t)>0$ for $t\in I$ unless it is a constant.

An analytic function $\varphi:\{z\in \mathbf{C}:\Im z>0\}\rightarrow\{z\in\mathbf{C}:\Im z\ge0\}$, belongs  to the Pick  class $\mathrm {P}(I)$ if it may
be analytically extended  across the interval $I$ by the Schwarz reflection principle, i.e., $\varphi (z ) = \overline  { \varphi (\overline {z}) }$. It
is clear  that   $\varphi (t)\in  \mathbf{R}$ if $t\in \mathbf{R}$. By the   Nevanlinna theorem  a  function   $\varphi\in \mathrm {P}(I)$ may be
represented   as
\begin{equation*}
\varphi (z) = \alpha +\beta z +  \int_{\mathbf{R}\backslash I} \frac{1+\lambda z}  { \lambda - z} d\mu (\lambda),\quad  z\in \mathbf{C}\backslash
\mathbf{R}\cup I,
\end{equation*}
where  $\mu$ is finite  positive  Borel  measure on $\mathbf{R}\backslash I$, and  $\alpha$,  $\beta\in \mathbf{R}$,  $\beta\ge  0$. Moreover, it may be
showed that a  function   $\varphi \in  \mathrm {P} (-1,1)$ may be represented in the  way
\begin{equation*}
\varphi (z)   = \varphi (0) +\varphi' (0) \int_{ [-1,1]} \frac {z}{1-tz } d\mu(t),\quad z\in  \mathbf{C} \backslash\mathbf{R}\cup  I,
\end{equation*}
where  $\mu$ is a probability  measure on $[-1,1]$.  For the first derivative  we have
\begin{equation*}
\varphi' (z )      =   \varphi' (0) \int_{ [-1,1] } \frac {1}{ (1-tz)^2 } d\mu(t),\quad z\in  \mathbf{C} \backslash\mathbf{R}\cup  I.
\end{equation*}
Let us note a simple  fact:   If  $\mu-$measure  of the interval  $(-1,0)$ is equal to $0$,  then  $\varphi' (t)$  is  increasing on  $[0,1)$.

Using the mentioned    integral representation of  a  function  in the Pick class   $\mathrm {P}(-1,1)$, and the L\"{o}wner's theorem
\cite[Theorem 2.7.7]{HIAI.INTER} which  says that if  $\varphi\in \mathrm{P}(I)$, then  $\varphi|_I\in  \mathrm{OM}(I)$,    and every $\varphi\in\mathrm
{OM}(I)$ is a restriction of a function in $\mathrm {P}(I)$, Joci\'{c} \cite[Lemma 2.1]{JOCIC.JFA}   proved the  following   lemma   for  $I = (-1,1)$.

\begin{lemma}\label{LE.JOCIC}
An operator monotone function $\varphi$ on an open  interval $I$   satisfies
\begin{equation*}
\varphi(t) - \varphi (s)\le \sqrt {  \varphi'(t )     \varphi'(s)} (t-s), \quad t\in I,\, s\in I,\, s<t.
\end{equation*}
\end{lemma}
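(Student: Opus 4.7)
The plan is to exploit the Nevanlinna integral representation of $\varphi$ (available by L\"{o}wner's theorem and the Pick class discussion recalled in the text) and then apply a single Cauchy--Schwarz inequality.

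I would start by extending $\varphi$ to a Pick function on $I$ and writing
\[
\varphi(z) = \alpha + \beta z + \int_{\mathbf{R}\setminus I} \frac{1+\lambda z}{\lambda - z}\,d\mu(\lambda),
\]
with $\beta \ge 0$ and $\mu$ a finite positive Borel measure. The algebraic identity
\[
\frac{1+\lambda t}{\lambda - t} - \frac{1+\lambda s}{\lambda - s} = \frac{(1+\lambda^{2})(t-s)}{(\lambda - t)(\lambda - s)}
\]
immediately yields
\[
\varphi(t) - \varphi(s) = (t-s)\left[\beta + \int_{\mathbf{R}\setminus I} \frac{1+\lambda^{2}}{(\lambda - t)(\lambda - s)}\,d\mu(\lambda)\right],
\]
while differentiating the representation gives
\[
\varphi'(t) = \beta + \int_{\mathbf{R}\setminus I} \frac{1+\lambda^{2}}{(\lambda - t)^{2}}\,d\mu(\lambda),
\]
and the analogous formula for $\varphi'(s)$.

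Next, since $s,t\in I$ and $\mu$ is supported in $\mathbf{R}\setminus I$, the factors $\lambda - t$ and $\lambda - s$ carry the same sign for each $\lambda$ in the support of $\mu$, so the integrand in the bracket is nonnegative. The central step is a Cauchy--Schwarz inequality applied to the pair
\[
F(\lambda) = \frac{\sqrt{1+\lambda^{2}}}{|\lambda - t|}, \qquad G(\lambda) = \frac{\sqrt{1+\lambda^{2}}}{|\lambda - s|}
\]
on the measure $\mu$ augmented by a single auxiliary atom of unit mass on which both $F$ and $G$ take the value $\sqrt{\beta}$. This gives
\[
\beta + \int_{\mathbf{R}\setminus I} \frac{1+\lambda^{2}}{(\lambda - t)(\lambda - s)}\,d\mu \le \sqrt{\varphi'(t)\varphi'(s)},
\]
and multiplication by $(t-s) > 0$ produces the desired inequality.

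The main obstacle is really bookkeeping rather than analysis: one must verify that the sign argument indeed forces $(\lambda - t)(\lambda - s) > 0$ on $\mathrm{supp}\,\mu$, and one must pair the free term $\beta$ on the left with the copies of $\beta$ sitting inside $\varphi'(t)$ and $\varphi'(s)$ so that Cauchy--Schwarz absorbs them cleanly. For the special case $I = (-1,1)$ recorded in the excerpt, the simpler representation in $\mathrm{P}(-1,1)$ eliminates the constant $\beta$ entirely, and the same Cauchy--Schwarz maneuver applied to $1/(1-\lambda t)$ and $1/(1-\lambda s)$ against the probability measure $\mu$ reproduces Joci\'{c}'s original argument without the atom trick.
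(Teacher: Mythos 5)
Your argument is correct, but it is not the route the paper takes. The paper does not reprove the inequality from the integral representation at all: it observes that the case $I=(-1,1)$ is exactly Joci\'{c}'s Lemma 2.1 in \cite{JOCIC.JFA}, and reduces the general case to it by composing with an increasing affine map $g$ with $g(-1,1)=I$ (noting that $\varphi\circ g\in\mathrm{OM}(-1,1)$ and that the inequality is invariant under this substitution, since $(\varphi\circ g)'=g'\cdot(\varphi'\circ g)$ contributes one factor of $g'$ on each side), together with exhaustion of an infinite $I$ by finite subintervals. You instead give a direct, self-contained proof from the Nevanlinna representation on a general $I$: the identity $\frac{1+\lambda t}{\lambda-t}-\frac{1+\lambda s}{\lambda-s}=\frac{(1+\lambda^2)(t-s)}{(\lambda-t)(\lambda-s)}$ is right, the sign observation $(\lambda-t)(\lambda-s)>0$ for $\lambda\in\mathbf{R}\setminus I$ and $s,t\in I$ is exactly what makes Cauchy--Schwarz applicable, and the atom of unit mass carrying the value $\sqrt{\beta}$ is a clean way to absorb the linear term $\beta z$ into the same Cauchy--Schwarz step (this is in effect Joci\'{c}'s original argument, transplanted from $(-1,1)$ to general $I$). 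What the paper's reduction buys is brevity and no need to restate the representation theorem for general $I$; what your version buys is independence from the cited special case and a transparent view of where the geometric mean $\sqrt{\varphi'(t)\varphi'(s)}$ comes from. The only points worth making explicit in a written-up version are the justification for differentiating under the integral sign (uniform boundedness of $\frac{1+\lambda^2}{(\lambda-t)^2}$ for $t$ in a compact subinterval of $I$ and $\lambda\in\mathbf{R}\setminus I$, plus finiteness of $\mu$) and, if you want the statement for nonconstant $\varphi$ only, that nothing degenerates when $\mu=0$ and $\beta=0$ (then $\varphi$ is constant and the inequality is trivial).
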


A  proof of  Lemma \ref{LE.JOCIC}  is similar as in the particular case $I = (-1,1)$. However, it   may be deduced from this  special case.  Indeed, it is
enough to note  that   if $\varphi$ belongs to the class  $\mathrm {OM} (I)$, where  $I$ is   a  finite interval, then  $\varphi \circ g$ belongs to
the class  $\mathrm {OM} (-1,1)$, where $g$ is an increasing linear mapping  such that  $g(-1,1) = I$. If $I$ is   infinite interval, and if $\varphi\in
 \mathrm {OM} (I)$ then  we have $\varphi\in \mathrm {OM}(I')$  for  any finite subinterval  $I'\subseteq I$.

\begin{theorem}\label{TH.JOCIC.NORMED}
Let $\Omega\subseteq X$ be a convex domain in a normed space $X$. Assume that  $\varphi$ is a    non-constant operator  monotone function on $(-\delta,M)$,
$\delta>0$,  $M =  \sup_{x\in \Omega }\|x\|$, and let $\varphi'$ be increasing on $[0,M)$. For a Fr\'{e}chet differentiable mapping $f:\Omega\rightarrow Y$,
the Bloch type semi-norm
\begin{equation*}
\|f\|_{\mathfrak{B}, \varphi'}= \sup_{x\in  \Omega }   \frac{\|d_f(x)\|}{\varphi'(\|x\|)}
\end{equation*}
may be expressed as
\begin{equation*}
\|f\|_{\mathfrak{B},\varphi'}=  \sup_{(x,y)\in \Delta^c_\Omega}  \frac {\|f(x) - f(y )\|}{\sqrt {\varphi'(\|x\|)} \|x-y\|  \sqrt{\varphi'(\|y\|)} }.
\end{equation*}
\end{theorem}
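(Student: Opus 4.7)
The plan is to realize this as a direct application of the min–max Theorem \ref{TH.PAVLOVIC.MS}, by choosing appropriate weights and an admissible function. Take the weight $\omega(x)=\varphi'(\|x\|)$ on $\Omega$ (well-defined and positive since $\varphi$ is non-constant and operator monotone, so $\varphi'>0$), and take $\tilde{\omega}\equiv 1$ on $Y$. By Lemma \ref{LE.NORMED.DIFF}, $d^\ast_f(x)=\|d_f(x)\|$, so the Bloch number coincides with $\|f\|_{\mathfrak{B},\varphi'}$. Since $\tilde{\omega}\equiv 1$ on the normed space $Y$, we have $d_{\tilde{\omega}}(u,v)=\|u-v\|$, and the simplified condition (4') from the previous section is what we need to verify. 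The natural candidate is
\begin{equation*}
\Psi(x,y)=\frac{1}{\sqrt{\varphi'(\|x\|)}\sqrt{\varphi'(\|y\|)}},
\end{equation*}
which is symmetric, continuous, and satisfies $\Psi(x,x)=1/\omega(x)=\tilde{\omega}(f(x))/\omega(x)$, giving conditions (1), (2), (3) for free.

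The core step is condition (4'), i.e.
\begin{equation*}
d_\omega(x,y)\le \sqrt{\varphi'(\|x\|)\varphi'(\|y\|)}\,\|x-y\|,\qquad x,y\in\Omega.
\end{equation*}
Since $\Omega$ is convex, the segment $[x,y]$ lies in $\Omega$, so I would estimate $d_\omega(x,y)$ by $\int_{[x,y]}\omega$. Using the formula for the integral along a segment,
\begin{equation*}
d_\omega(x,y)\le \|x-y\|\int_0^1\varphi'(\|(1-t)x+ty\|)\,dt.
\end{equation*}
By the triangle inequality $\|(1-t)x+ty\|\le (1-t)\|x\|+t\|y\|$, and since $\varphi'$ is increasing on $[0,M)$, the integrand is at most $\varphi'((1-t)\|x\|+t\|y\|)$. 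Assuming WLOG $\|x\|<\|y\|$, the substitution $u=(1-t)\|x\|+t\|y\|$ turns this integral into $(\varphi(\|y\|)-\varphi(\|x\|))/(\|y\|-\|x\|)$. Now Lemma \ref{LE.JOCIC} applied to the operator monotone $\varphi$ yields $\varphi(\|y\|)-\varphi(\|x\|)\le \sqrt{\varphi'(\|x\|)\varphi'(\|y\|)}(\|y\|-\|x\|)$, giving precisely the required bound. The case $\|x\|=\|y\|$ follows trivially or by continuity.

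Once admissibility is established, Theorem \ref{TH.PAVLOVIC.MS} gives $\mathfrak{B}_f=\mathfrak{L}_f$, which unwinds exactly to the stated equality between $\|f\|_{\mathfrak{B},\varphi'}$ and the supremum on the right-hand side. (If both quantities are $+\infty$ the equality is trivial; finiteness of one forces finiteness of the other, by the min–max theorem.) The only real obstacle is the chain of estimates in step two above; everything else is a direct verification or an invocation of results already proved. The key ingredients that make the argument go through are convexity of $\Omega$ (to get a segment in the domain), monotonicity of $\varphi'$ on $[0,M)$ (to control the integrand on that segment by a scalar integral), and Joci\'{c}'s inequality (Lemma \ref{LE.JOCIC}) which is what ultimately produces the geometric mean $\sqrt{\varphi'(\|x\|)\varphi'(\|y\|)}$.
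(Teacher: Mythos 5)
Your proposal is correct and follows essentially the same route as the paper: the same weights $\omega(x)=\varphi'(\|x\|)$, $\tilde\omega\equiv 1$, the same candidate $\Psi(x,y)=\varphi'(\|x\|)^{-1/2}\varphi'(\|y\|)^{-1/2}$, and the same verification of condition (4') via the segment integral, the triangle inequality combined with monotonicity of $\varphi'$, the reduction to the difference quotient of $\varphi$, and Lemma \ref{LE.JOCIC}. Nothing is missing.
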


\begin{proof}
By  Theorem \ref{TH.PAVLOVIC.MS}  it is enough to prove that
\begin{equation*}
\Psi(x,y) = {{\varphi'(\|x\|)^ {- \frac12 } } {\varphi'(\|y\|)^{-\frac 12} }},\quad x,y\in \Omega,
\end{equation*}
is an admissible function for the mapping $f$ with respect to   the     weight functions   $\omega = {\varphi'} $ on $\Omega$ and
$\tilde {\omega}  \equiv 1$ on   $Y$. Obviously,  $\Psi$  is continuous and  symmetric  on  $\Omega\times \Omega$, and $\Psi(x,x) =
\frac{1} { \varphi'(\|x\|)}$ for   $x\in  \Omega$.   It remains   to prove the inequality
\begin{equation*}
\Psi (x,y) d_{\varphi'} (x,y)\le    \|x-y\|, \quad x,\, y\in \Omega.
\end{equation*}
Indeed,     if $\gamma\subseteq\Omega$ is among  paths that connect  $x$ and $y$, and if    (for example)   $\|x\|<\|y\|$ (if $\|x\|=
\|y\|$ the proof below  is trivial),  we have
\begin{equation*}\begin{split}
d_{\varphi'} (x,y) &=  \inf_{\gamma} \int_\gamma   {\varphi'}
\le  \int _{[x,y]} {\varphi'}
 = \|x-y\| \int _0 ^1 \varphi' (\| (1-t) x + t y \|) dt
\\& \le  \|x-y\| \int _0 ^1 \varphi' ( (1-t)  \|x\| +  t \|y \|) dt
\\&= \frac{ \|x-y\|}{\|y\| - \|x\|}   \int _0 ^1 \frac {d}{dt}   (\varphi  ( \|x\| +  t (\|y \| -\|x\|)) )
\\&  = \|x-y\| \frac{ \varphi(\|y\|) - \varphi (\|x\|)}{\|y\| - \|x\|}
  \le \|x-y\|  \sqrt{\varphi (\|x\|)}   \sqrt{\varphi (\|y\|)}  \\& =\|x-y\|\Psi (x,y)^{-1},
\end{split}\end{equation*}
where we have used Lemma \ref{LE.JOCIC} in the last inequality.
\end{proof}

The     following corollary  is  generalization of Proposition  \ref{PR.PAVLOVIC} for normed  spaces  and  Fr\'{e}chet differentiable
mappings. Bloch spaces on domains in a Banach space  have been recently  considered  in \cite{BLASCO.JFA, CHUA.JFA}.

\begin{corollary}\label{COR.PAVLOVIC}
Let $X$ and $Y$ be normed spaces. For a Fr\'{e}chet differentiable mapping $f:{B}_X \to   Y$ the  Bloch semi-norm
\begin{equation*}
\|f\| _{\mathfrak{B} }  = \sup_{x\in{B}_X} (1-\|x\|^2)   \| d_f (x)\|
\end{equation*}
is equal to
\begin{equation*}
\|f\| _{\mathfrak{B}}
=\sup_{(x,y)\in \Delta^c_{B_X}} (1-\|x\|^2)^{\frac 12} (1-\|y\|^2)^{\frac 12}  \frac{\|f(x)-f (y)\|}{\|x-y\|}.
\end{equation*}
\end{corollary}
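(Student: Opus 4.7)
The plan is to deduce this corollary as a direct specialization of Theorem \ref{TH.JOCIC.NORMED} by choosing the operator monotone function
\begin{equation*}
\varphi(t) = \tanh^{-1} t = \frac{1}{2} \log \frac{1+t}{1-t}, \quad t \in (-1,1),
\end{equation*}
which was already identified in the introduction as the right primitive for recovering the classical Bloch weight.

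First I would set $\Omega = B_X$, so $M = \sup_{x \in B_X} \|x\| = 1$, and choose any $\delta \in (0,1]$, so that $\varphi$ is defined on $(-\delta, M) = (-\delta, 1) \subseteq (-1,1)$. I would then check the hypotheses of Theorem \ref{TH.JOCIC.NORMED}: (i) $\varphi$ is non-constant and operator monotone on $(-1,1)$ (a standard fact, already cited from \cite{JOCIC.JFA}); (ii) its derivative $\varphi'(t) = (1-t^2)^{-1}$ is clearly increasing on $[0,1)$.

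Next I would identify the two semi-norms. Since $\omega(x) = \varphi'(\|x\|) = (1-\|x\|^2)^{-1}$, the Bloch type semi-norm of Theorem \ref{TH.JOCIC.NORMED} reads
\begin{equation*}
\|f\|_{\mathfrak{B},\varphi'} = \sup_{x \in B_X} \frac{\|d_f(x)\|}{(1-\|x\|^2)^{-1}} = \sup_{x \in B_X} (1-\|x\|^2) \|d_f(x)\| = \|f\|_{\mathfrak{B}},
\end{equation*}
and the differential-free expression becomes
\begin{equation*}
\sup_{(x,y) \in \Delta^c_{B_X}} \frac{\|f(x)-f(y)\|}{\sqrt{\varphi'(\|x\|)}\,\|x-y\|\,\sqrt{\varphi'(\|y\|)}} = \sup_{(x,y) \in \Delta^c_{B_X}} (1-\|x\|^2)^{\frac12}(1-\|y\|^2)^{\frac12} \frac{\|f(x)-f(y)\|}{\|x-y\|},
\end{equation*}
which is exactly the right-hand side of the corollary. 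Applying Theorem \ref{TH.JOCIC.NORMED} (and noting that $B_X$ is convex) yields the claimed equality.

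There is no real obstacle here; the whole point of Theorem \ref{TH.JOCIC.NORMED} was to accommodate this case, so the proof amounts to verifying the three bookkeeping items above (operator monotonicity of $\tanh^{-1}$, monotonicity of $\varphi'$ on $[0,1)$, and the algebraic identifications of $\omega$ and $\sqrt{\varphi'(\|x\|)\varphi'(\|y\|)}$). The only point requiring a sentence of justification is the operator monotonicity of $\tanh^{-1}$, for which I would simply reference \cite{JOCIC.JFA} rather than reprove it.
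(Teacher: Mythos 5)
Your proposal is correct and coincides with the paper's own argument: the paper disposes of this corollary in a single sentence by taking $\varphi=\tanh^{-1}\in\mathrm{OM}(-1,1)$ in Theorem \ref{TH.JOCIC.NORMED}, and you have simply written out the same routine verifications (operator monotonicity, $\varphi'(t)=(1-t^2)^{-1}$ increasing on $[0,1)$, and the algebraic identification of the two semi-norms) in full detail.
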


This corollary         follows  from the  Theorem \ref{TH.JOCIC.NORMED} if we take  $\varphi  = \tanh^{-1} \in  \mathrm { OM}(-1,1)$.

\subsection{The case of weights monotone in norm}
In Theorem \ref{TH.MIN.MAX},  which particular case (for $\tilde{\Omega}=Y$  and  $\tilde{\omega} \equiv 1$) is  mentioned in  the Introduction, we have
found   a very simple admissible function for weight functions  monotone  in norm  on domains of normed spaces. It would be of some  interest to find
a  such simple admissible function for other domains and/or for other     weight  functions not  necessary monotone  in  norm.

\begin{theorem}\label{TH.MIN.MAX}
Let   $\omega(x)$ be a weight  function on  a convex  domain $\Omega$ in  a normed space $X$ which is  monotone in  $\|x\|$, and let $\tilde \omega (y)$
be a weight  function   on    a  domain $\tilde{\Omega}$  in a normed space $Y$  which is   monotone    in   $\|y\|$. The  Bloch number
\begin{equation*}
\mathfrak{B}_f  = \sup_{x \in \Omega}\frac { \tilde{\omega} (f(x))}{\omega(x) } \|d_f(x)\|
\end{equation*}
of a Fr\'{e}chet  differentiable mapping $f:\Omega\rightarrow \tilde{\Omega}$ is equal  to
\begin{equation*}
\mathfrak{B} _f = \sup_{(x,y)\in\Delta^c_\Omega}   \frac{\min\{\tilde{\omega} (f(x)),  \tilde{\omega} (f(y))\}}{ \max \{\omega (x),\omega(y)\}}
\frac {\|f(x) - f(y)\|}{\|x - y\|}.
\end{equation*}
\end{theorem}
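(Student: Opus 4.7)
The plan is to apply Theorem \ref{TH.PAVLOVIC.MS} with the simple admissible function
\begin{equation*}
\Psi_f(x,y)=\frac{\min\{\tilde\omega(f(x)),\tilde\omega(f(y))\}}{\max\{\omega(x),\omega(y)\}},
\end{equation*}
whose associated Lipschitz number is, by construction, exactly the right-hand side of the claimed equality. Symmetry (condition (1)) and the diagonal value $\Psi_f(x,x)=\tilde\omega(f(x))/\omega(x)$ (condition (2)) are immediate, and continuity of $\omega$, $\tilde\omega$, and $f$ yields the full limit $\lim_{y\to x}\Psi_f(x,y)=\Psi_f(x,x)$, which implies the liminf condition (3).

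The heart of the argument is the fourth admissibility condition, which after clearing denominators amounts to
\begin{equation*}
\min\{\tilde\omega(f(x)),\tilde\omega(f(y))\}\, d_\omega(x,y)\,\|f(x)-f(y)\|\le\max\{\omega(x),\omega(y)\}\, d_{\tilde\omega}(f(x),f(y))\,\|x-y\|,
\end{equation*}
and I would reduce this to two independent pointwise estimates: $(a)$ $d_\omega(x,y)\le\max\{\omega(x),\omega(y)\}\|x-y\|$ and $(b)$ $d_{\tilde\omega}(f(x),f(y))\ge\min\{\tilde\omega(f(x)),\tilde\omega(f(y))\}\|f(x)-f(y)\|$; multiplying the two reproduces the required inequality.

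For $(a)$, convexity of $\Omega$ makes the segment $[x,y]$ an admissible path, so by the computation for segments recorded in Section~2, $d_\omega(x,y)\le\int_{[x,y]}\omega=\|x-y\|\int_0^1\omega((1-t)x+ty)\,dt$; monotonicity of $\omega$ in the norm together with $\|(1-t)x+ty\|\le\max\{\|x\|,\|y\|\}$ then bounds the integrand above by $\max\{\omega(x),\omega(y)\}$. For $(b)$, the target is the pointwise lower bound $\int_\delta\tilde\omega\ge\min\{\tilde\omega(f(x)),\tilde\omega(f(y))\}\|f(x)-f(y)\|$ on every rectifiable path $\delta$ in $\tilde\Omega$ from $f(x)$ to $f(y)$: one combines $\ell(\delta)\ge\|f(x)-f(y)\|$ with a lower bound on $\tilde\omega$ along $\delta$ coming from the monotonicity assumption, and then infimizes over $\delta$. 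This last step is the main obstacle, since monotonicity of $\tilde\omega$ in the norm does not literally force $\tilde\omega(z)\ge\min\{\tilde\omega(f(x)),\tilde\omega(f(y))\}$ pointwise along an arbitrary path; the delicate point is to extract from the monotonicity a sharper estimate that couples the path length to the norm of $z$ along $\delta$, so that any contribution from points where $\tilde\omega$ dips below the target minimum is compensated by the corresponding extra length. Once $(a)$ and $(b)$ are in hand, Theorem \ref{TH.PAVLOVIC.MS} gives $\mathfrak{B}_f=\mathfrak{L}_f$, which is the assertion.
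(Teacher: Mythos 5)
Your proposal follows the paper's proof step for step: the same admissible function $\Psi_f$, the same verification of conditions (1)--(3), the same reduction of condition (4) to the two estimates $(a)$ $d_\omega(x,y)\le\max\{\omega(x),\omega(y)\}\,\|x-y\|$ and $(b)$ $d_{\tilde\omega}(f(x),f(y))\ge\min\{\tilde\omega(f(x)),\tilde\omega(f(y))\}\,\|f(x)-f(y)\|$, and the same proof of $(a)$ via the segment $[x,y]$ and convexity of $\Omega$. The difference is that you do not actually prove $(b)$: you correctly observe that monotonicity of $\tilde\omega$ in the norm does not give the pointwise bound $\tilde\omega(z)\ge\min\{\tilde\omega(f(x)),\tilde\omega(f(y))\}$ along an arbitrary rectifiable path in $\tilde\Omega$ (a path may dip toward the origin when $\tilde\omega$ is increasing in the norm, or escape to large norms when it is decreasing), and you then defer the resolution to an unspecified ``sharper estimate that couples the path length to the norm of $z$ along $\delta$.'' No such estimate is produced, so the fourth admissibility condition --- the only nontrivial one --- is left unverified, and the proposal is incomplete as a proof.

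For the record, the paper closes $(b)$ by restricting the infimum defining $d_{\tilde\omega}(f(x),f(y))$ to the subfamily of paths $\tilde\gamma$ that stay in the region where monotonicity does yield the pointwise lower bound, asserting that this restriction does not change the infimum, and then combining $\tilde\omega(z)\ge\min\{\tilde\omega(f(x)),\tilde\omega(f(y))\}$ on $\tilde\gamma$ with $\ell(\tilde\gamma)\ge\|f(x)-f(y)\|$. Your instinct that this is the delicate point is sound: the replacement of an arbitrary path by a restricted one of no greater $\tilde\omega$-length is itself a claim that needs an argument (in a Hilbert space the radial retraction onto the appropriate ball is $1$-Lipschitz and does the job; in a general normed space that retraction is only $2$-Lipschitz), and the paper states the equality of the two infima without proof. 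But identifying a difficulty is not the same as overcoming it; to complete your write-up you must either carry out such a path-replacement argument or find another route to $(b)$.
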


\begin{proof}
In order  to apply  Theorem \ref{TH.PAVLOVIC.MS} for  $\Omega\subseteq X$  and  $\tilde{\Omega}\subseteq Y$  with the metrics induces by the norms on $X$
and $Y$, respectively, we have  to  prove  that
\begin{equation*}
\Psi _f (x,y) =   \frac{\min \{\tilde{\omega}(f(x)),\tilde{\omega}(f(y))\}}{\max \{\omega(x), \omega(y)\}}, \quad x,\, y\in\Omega
\end{equation*}
is an admissible function  for the  mapping  $f:\Omega\to \tilde{\Omega}$  with respect to $\omega$ and $\tilde{\omega}$. Since it is obviously symmetric
and  continuous on $\Omega\times \Omega$, and $\Psi _f (x,x)  = \frac{ \tilde{\omega}(f(x))} { \omega(x)}$ for $x\in \Omega$,  we have only to  show that
$\Psi _f$ satisfies
\begin{equation*}\begin{split}
\Psi_f(x,y)\frac{\|f(x)-f(y)\|}{\|x-y\|}\le \frac {d_{\tilde {\omega}}  (f(x),f(y))}{d_\omega (x,y)},\quad x,\, y\in \Omega.
\end{split} \end{equation*}

Since       $\omega(z)$ is monotone    in $\|z\|$, and since $\Omega$ is convex, we have
\begin{equation*}\begin{split}
d_\omega (x,y)&   \le\int_{[x,y]} {\omega}  \le \max_{z\in [x,y]}{\omega(z)}\|x-y\| = \max \{\omega(x),\omega(y)\} \|x-y\|
\end{split} \end{equation*}
for $x\in\Omega $  and $y\in\Omega $.

If $\tilde{\omega}(z)$ is increasing in $\|z\|$, let $\tilde{\gamma}:[a,b]\to Y$      be among paths in $\Omega$  that connect $x\in \Omega $ and $y\in
\Omega$ and satisfy  $\| \tilde{\gamma} (t) \| \le  \max\{ \|x\|,\|y\|\}$, $t\in  [a,b]$. On the other hand, if   $\omega (z)$ is  a decreasing function
of $\|z\|$, then we   should   consider  only  the paths $\tilde{\gamma}$ such that $\| \tilde{\gamma} (t) \| \ge  \min\{ \|x\|,\|y\|\}$).
Let $\gamma\subseteq  \Omega $ be among paths that connect  $x$ and $y$  without any restriction.   Now,   we have
\begin{equation*}\begin{split}
d_{\tilde{\omega}} (x,y)& = \inf_\gamma  \int_\gamma \tilde{\omega}
= \inf_{\tilde {\gamma}}  \int_{\tilde {\gamma}}
 \tilde{\omega}  \ge \inf_{\tilde {\gamma}}  \{ \min_{z\in\tilde{\gamma}}{\tilde{\omega}(z)}\|x- y\|\}\ge
\min \{\tilde{\omega}(x),\tilde{\omega}(y)\} \|x-y\|.
\end{split} \end{equation*}

It follows
\begin{equation*}\begin{split}
\frac {d_{\tilde {\omega}}  (f(x),f(y))}{d_\omega (x,y)}
& \ge \frac{\min \{\tilde{\omega}(f(x)),\tilde{\omega}(f(y))\} }{\max \{\omega(x),\omega(y)\}}
 \frac{\|f(x)-f(y)\|}{\|x-y\|},
\end{split} \end{equation*}
which proves the fourth condition  of   admissibility.
\end{proof}

\subsection{Characterizations of Bloch and  normal mappings}
Recall  that  an analytic function  $f$  on the unit disc    is  a  Bloch function if it satisfies the  growth   condition
\begin{equation*}
|f'(z)|\le\frac C{1-|z|^2},\quad  z\in \mathbf{U},
\end{equation*}
where $C$ is a constant. For equivalent definitions we refer to \cite{ANDERSON.JRAM}. On the  other  hand, an analytic function $f$ is normal on the
unit disc if   the family
\begin{equation*}
\{f\circ\varphi: \varphi\, \text{is a  Moebius transform of}\, \mathbf{U}\}
\end{equation*}
is a normal family \cite{LEHTO.ACTA}. However, it is known that this condition is    equivalent to the growth  condition
\begin{equation*}
\frac {|f'(z)|}{1+|f(z)|^2}\le    \frac C{1-|z|^2},\quad   z\in \mathbf{U},
\end{equation*}
where  $C$ is a constant. For similarities between   analytic  Bloch and  normal  functions  we refer   to  \cite{ANDERSON.JRAM, COLONNA.PALERMO}.

Let $\rho$  be the hyperbolic distance on the unit  disc   $\mathbf{U}$, and let $\sigma$ be the spherical  distance on $\mathbf{C}$. It is very well
known that an analytic  function is   Bloch  function   if and only if it is  Lipschitz continuous  with respect to  the Euclidean   distance and the  hyperbolic distance, respectively, i.e.,
\begin{equation*}
| f(z) - f(w)| \le C \rho(z,w),\quad   z,\, w\in \mathbf{U},
\end{equation*}
as well as  that the   function $f$ is  normal  if and only if  it  satisfies
\begin{equation*}
\sigma( f(z),f(w)) \le  C\rho(z,w),\quad  z,\, w\in \mathbf{U}.
\end{equation*}
It  is also well  known that these conditions are equivalent to the corresponding  growth conditions.      This follows  also from the remark  after
Corollary  \ref{COR.M}.

The aim of this last subsection  is to  give a   more direct proof of Proposition \ref{PR.PAVLOVIC} following and simplifying the proof form
\cite{MARKOVIC.CMB}, and to consider the Bloch type growth  condition  which satisfies a normal  function. We derive a new criteria for  normality of
analytic mapping    on the unit disc.

In    Theorem \ref{TH.PAVLOVIC.MS} let us    take $X = {B}^m$ with the standard distance, and  let $Y$  be  a  normed space. Moreover, let  $\omega(x)
=  (1-|x|^2)^{-1}$,  $x\in {B}^m$  and $\tilde {\omega}\equiv 1$  be the weigh functions. Then $d_{\omega}$ is the hyperbolic  distance on the unit
ball  ${B}^m$, which  in the sequel we  denote  by $\rho $,  and $d_{\tilde{\omega}}$ is  the distance on $Y$  produced by  the     norm $\|\cdot\|_Y$.
The  hyperbolic distance  on  ${B}^m$ is  given      explicitly      by
\begin{equation*}
\rho (x,y)  =  \mathrm {asinh} \frac{|x - y |}{\sqrt {1-|x|^2}\sqrt {1-|y^2}}, \quad x,\, y\in {B}^m
\end{equation*}
(see \cite{VUORINEN.BOOK}). We will prove  that an  admissible function in this  setting  is
\begin{equation*}
\Psi (x, y)   = \sqrt{1-|x|^2} \sqrt{1-|y|^2},\quad   x,\,  y\in B^m.
\end{equation*}
We only   have to show  that  $\rho (x,y)  \Psi  (x,y)\le |x-y|$, $x,\, y\in B^m$. By   the  inequality $\mathrm {asinh}\, t\le t$, $t\ge0$ (indeed,
let $\phi(t) =  \mathrm {asinh}(t)  -t$; then we have $\phi(0)=0$, and $\phi'(t)  =    \frac{1}{\sqrt{1+t^2} }-1<0$ for   $t>0$, so our inequality
follows  since  $\phi(t)\le \phi (0) = 0$), we deduce
\begin{equation*}\begin{split}
\frac { |x - y|}{\rho (x,y) } &
= {|x-y |} : { \mathrm {asinh}  \frac{|x - y  |}{\sqrt {1-|x|^2}\sqrt {1-|y|^2}}} \ge   {|x-y |} : {\frac{|x - y  |}{\sqrt {1-|x|^2}\sqrt {1-|y|^2}}}
\\&=  \sqrt {1-|x|^2} \sqrt {1-|y|^2}   =    \Psi(x,y),\quad x,\,  y\in {B}^m.
\end{split}\end{equation*}
Proposition \ref{PR.PAVLOVIC}  now follows form Theorem \ref{TH.PAVLOVIC.MS}.

As we have  said the   growth condition given in the corollary below   is satisfied   by  the class of normal functions on the unit disc.

\begin{corollary}
A differentiable  mapping $f: {B} ^m\rightarrow \mathbf{R}^2$     satisfies the   growth  condition
\begin{equation*}
\frac{\|d_f(x)\|} { 1-|x|^2}\le   \frac {C}{ 1+|f(x)|^2}, \quad x\in {B}^m,
\end{equation*}
where  $C$ is a constant,  if and only if
\begin{equation*}
|f(x) -  f(y)| \le C  |x-y|\frac{\sqrt{1+|f(x)| ^2} \sqrt{1+|f(y)|^2}}{\sqrt{1-|x|^2} \sqrt{1-|y|^2}},\quad x,\,  y\in {B}^m.
\end{equation*}
\end{corollary}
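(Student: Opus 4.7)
The plan is to apply Theorem~\ref{TH.PAVLOVIC.MS} with $X = B^m$ under the Euclidean metric and weight $\omega(x) = (1-|x|^2)^{-1}$, and with $Y = \mathbf{R}^2$ under the Euclidean metric and weight $\tilde\omega(y) = (1+|y|^2)^{-1}$. With these choices $d_\omega$ is the hyperbolic distance $\rho(x,y) = \mathrm{asinh}\bigl(|x-y|/\sqrt{(1-|x|^2)(1-|y|^2)}\bigr)$ on the ball, and $d_{\tilde\omega}$ is the spherical distance $\sigma$ on $\mathbf{R}^2$. Using Lemma~\ref{LE.NORMED.DIFF} to identify $d_f^\ast(x)$ with $\|d_f(x)\|$, the Bloch number becomes
\begin{equation*}
\mathfrak{B}_f = \sup_{x\in B^m} \frac{(1-|x|^2)\|d_f(x)\|}{1+|f(x)|^2},
\end{equation*}
so $\mathfrak{B}_f \le C$ is precisely the growth condition of the corollary.

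The heart of the argument is to produce an admissible function that realizes the symmetric square-root form. I take
\begin{equation*}
\Psi_f(x,y) = \frac{\sqrt{(1-|x|^2)(1-|y|^2)}}{\sqrt{(1+|f(x)|^2)(1+|f(y)|^2)}}.
\end{equation*}
Symmetry (1), the diagonal identity $\Psi_f(x,x) = (1-|x|^2)/(1+|f(x)|^2) = \tilde\omega(f(x))/\omega(x)$ in (2), and condition (3) (immediate from continuity of $f$) are straightforward. The substantive point is condition (4): after multiplying through and using the explicit formula $\sinh\rho(x,y) = |x-y|/\sqrt{(1-|x|^2)(1-|y|^2)}$, the required inequality
\begin{equation*}
\Psi_f(x,y)\,\frac{|f(x)-f(y)|}{|x-y|} \le \frac{\sigma(f(x),f(y))}{\rho(x,y)}
\end{equation*}
rearranges to
\begin{equation*}
\rho(x,y)\,\chi(f(x),f(y)) \le \sinh\rho(x,y)\,\sigma(f(x),f(y)),
\end{equation*}
where $\chi(a,b) = |a-b|/\sqrt{(1+|a|^2)(1+|b|^2)}$ is the classical chordal distance. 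This is the product of two elementary estimates: $t \le \sinh t$ for $t \ge 0$ (applied to $\rho$) and $\chi \le \sigma$ (chord at most arc on the Riemann sphere; in fact $\chi = \sin\sigma$, so $\chi \le \sigma$ by $\sin t \le t$).

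With admissibility in hand, Theorem~\ref{TH.PAVLOVIC.MS} gives $\mathfrak{B}_f = \mathfrak{L}_f$, and since $\mathfrak{L}_f \le C$ is precisely the differential-free inequality in the statement, the corollary follows. The principal obstacle is verification of condition (4), which rests on combining the hyperbolic geometry of $B^m$ and the spherical geometry of $\mathbf{R}^2$ through the two comparison inequalities above. Note that invoking Theorem~\ref{TH.MIN.MAX} directly (both $\omega$ and $\tilde\omega$ are monotone in the norm) would yield only the weaker $\min$/$\max$ form, so the symmetric square-root form genuinely requires the sharper admissible $\Psi_f$ constructed here.
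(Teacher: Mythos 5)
Your proof is correct and follows essentially the same route as the paper: the same choice of weights $\omega(x)=(1-|x|^2)^{-1}$ and $\tilde\omega(z)=(1+|z|^2)^{-1}$, the same admissible function $\Psi_f$, and condition (4) reduced to the elementary comparison $t\le\sinh t$ (equivalently $\operatorname{arcsinh}t\le t$). The one refinement you add is to distinguish the chordal distance $\chi$ from the geodesic spherical distance induced by $\tilde\omega$ and to supply the extra inequality $\chi\le\sigma$, whereas the paper simply identifies $d_{\tilde\omega}$ with the chordal formula $|z-w|/\bigl(\sqrt{1+|z|^2}\sqrt{1+|w|^2}\bigr)$; your version is the more careful one, and the inequality goes in the direction needed either way.
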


\begin{proof}
In Theorem \ref{TH.PAVLOVIC.MS} let us take  $X = {B} ^m$ and  $Y= \mathbf{R}^2$    with Euclidean distances, and let
\begin{equation*}
\omega (x) = ({1-|x|^2})^{-1},\quad  x\in {B}^m,\quad  \tilde{\omega}(z)= ({1+|z|^2})^{-1},\quad  z\in \mathbf{R}^2
\end{equation*}
be the weight functions on these domains. As we have  already said, $d_{\omega}$ is  the hyperbolic  distance $\rho$ on  the unit ball $ {B}^m$.
On the other hand,  $d_{\tilde{\omega}}  $ is  the  spherical distance $\sigma $  on $\mathbf{R}^2$.
It is well known that
\begin{equation*}
\sigma (z,w ) = \frac{|z-w|}{\sqrt{1+|z|^2} \sqrt{1+|w|^2}},\quad z,\,  w\in \mathbf{R}^2.
\end{equation*}

We are going  to show  that
\begin{equation*}
\Psi _f (x,y) =\frac{\sqrt{1-|x|^2} \sqrt{1-|y|^2}}{\sqrt{1+|f(x)|^2} \sqrt{1+|f(y)|^2}},\quad x,\, y\in B^m
\end{equation*}
is an  admissible  function for the mapping  $f$ with respect to the hyperbolic and  spherical weights. Since
\begin{equation*}
\Psi_f (x,x)  = \frac{ 1-|x|^2 }{ 1+|f(x)|^2 }  =  \frac{ 1 }{ 1+|f(x)|^2 } : \frac{ 1 }{ 1-|x|^2 },\quad  x\in  {B}^m,
\end{equation*}
and  since  $\Psi_f(x,y)$  is obviously  symmetric and continuous, it remains only  to prove that  $\Psi_f(x,y)$ satisfies
\begin{equation*}
\Psi_f(x,y)  \frac{|f(x)-f(y)|}{|x-y|} \le \frac {\sigma (f(x),f(y))}{\rho (x,y)}, \quad x,\,   y\in  {B} ^m.
\end{equation*}
Having on mind  the  inequality $\mathrm {asinh}\, t\le t$ for $t\ge  0$, we obtain
\begin{equation*}\begin{split}
\frac{\sigma (f(x),f(y) )}{\rho(x,y)}
& =  { \frac{|f(x)-f(y)|}{\sqrt{1+|f(x)|^2} \sqrt{1+|f(y)|^2}}}
: {\mathrm{asinh}  \frac{|x-y|}{\sqrt{1-|x|^2} \sqrt{1-|y|^2}}}
\\&\ge { \frac{|f(x)-f(y)|}{\sqrt{1+|f(x)|^2} \sqrt{1+|f(y)|^2}}}
 : {\frac{|x-y|}{\sqrt{1-|x|^2} \sqrt{1-|y|^2}}}
\\&= \frac{\sqrt{1-|x|^2} \sqrt{1-|y|^2}}{\sqrt{1+|f(x)|^2} \sqrt{1+|f(y)|^2}}
 \frac{|f(x)-f(y)|}{|x-y|} \\&= \Psi_f(x,y)  \frac{|f(x)-f(y)|}{|x-y|},
\end{split}\end{equation*}
which is the inequality we aimed to prove.
\end{proof}

Based   on the preceding results,  we   now  state the characterisations of  Bloch  and  normal functions  on  the  unit disc.

\begin{proposition}
Let $f:\mathbf{U}  \to \mathbf{C}$  be an analytic function.

\begin{equation*}
f \ \text{is a Bloch if and only if}\ {\sqrt{1-|z|^2} \sqrt{1-|w|^2}} \frac {|f(z) -  f(w)|}{|z-w|}\ \text{is bounded  for}\   z\ne w.
\end{equation*}

\begin{equation*}
f \ \text{is a normal iff}\  \frac{\sqrt{1-|z|^2} \sqrt{1-|w|^2}}{\sqrt{1+|f(z)|^2} \sqrt{1+|f(w)|^2}} \frac {|f(z) -  f(w)|}{|z-w|}\ \text{is bounded  for}\    z\ne w.
\end{equation*}
\end{proposition}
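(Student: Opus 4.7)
The plan is to deduce both equivalences directly from the results already established: the first from Corollary~\ref{COR.PAVLOVIC} and the second from the corollary immediately preceding the statement. In both cases the key observation that bridges the complex-analytic language and the normed-space language used in the paper is that if $f:\mathbf{U}\to\mathbf{C}$ is analytic and viewed as a Fr\'{e}chet differentiable mapping between the real normed spaces $\mathbf{C}\cong\mathbf{R}^2$, then the operator norm of the Fr\'{e}chet differential satisfies $\|d_f(z)\|=|f'(z)|$ at every $z\in\mathbf{U}$; this is immediate because the $\mathbf{R}$-linear map $d_f(z)$ acts as multiplication by the complex number $f'(z)$, whose operator norm equals its modulus.

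For the first equivalence, I would invoke Corollary~\ref{COR.PAVLOVIC} with $X=Y=\mathbf{C}$, so that $B_X=\mathbf{U}$. By the identification above,
\begin{equation*}
\|f\|_{\mathfrak{B}}=\sup_{z\in\mathbf{U}}(1-|z|^2)|f'(z)|,
\end{equation*}
and by definition $f$ is a Bloch function precisely when this quantity is finite. Corollary~\ref{COR.PAVLOVIC} asserts that this semi-norm equals the supremum of the two-point expression $\sqrt{1-|z|^2}\sqrt{1-|w|^2}\,|f(z)-f(w)|/|z-w|$ over $\Delta^c_{\mathbf{U}}$, so the boundedness of the latter expression as a function of $(z,w)$ with $z\ne w$ is equivalent to $f$ being Bloch.

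For the second equivalence, I would apply the normal-type corollary that precedes the proposition with $m=2$ and with $f:B^2\to\mathbf{R}^2$ identified with the analytic map $f:\mathbf{U}\to\mathbf{C}$. The growth condition
\begin{equation*}
\frac{\|d_f(z)\|}{1-|z|^2}\le\frac{C}{1+|f(z)|^2},\quad z\in\mathbf{U},
\end{equation*}
becomes, under $\|d_f(z)\|=|f'(z)|$, exactly the standard normality growth condition $(1-|z|^2)|f'(z)|/(1+|f(z)|^2)\le C$ recalled in the preceding discussion. The corollary tells us this is equivalent to
\begin{equation*}
|f(z)-f(w)|\le C|z-w|\,\frac{\sqrt{1+|f(z)|^2}\sqrt{1+|f(w)|^2}}{\sqrt{1-|z|^2}\sqrt{1-|w|^2}},
\end{equation*}
which is precisely the boundedness of the displayed quotient in the statement.

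There is essentially no technical obstacle: the heavy lifting was done in Theorem~\ref{TH.PAVLOVIC.MS} and its specializations, so the only point that requires a brief justification is the identity $\|d_f(z)\|=|f'(z)|$ for analytic $f$, which is routine. If anything needs a small care, it is the verification that the normal-function growth condition as usually stated matches the hypothesis of the preceding corollary; but since both sides reduce to $(1-|z|^2)|f'(z)|/(1+|f(z)|^2)\le C$, this is immediate.
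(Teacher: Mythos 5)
Your proposal is correct and follows essentially the same route as the paper: the paper states this proposition as an immediate consequence of the preceding results (the Pavlovi\'{c}-type equality on the ball, respectively the normal-type corollary with the spherical weight), with the same bridging identity $\|d_f(z)\|=|f'(z)|$ for analytic $f$ that the paper records in the introduction. No gaps.
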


\end{document}